\numberwithin{equation}{section}
\newtheorem{theorem}{Theorem}[section]
\newtheorem{lemma}[theorem]{Lemma}
\newtheorem{proposition}[theorem]{Proposition}
\title[Normal operators for momentum ray transforms]{Normal operators for momentum ray transforms,\\  I: The inversion formula}
\author[S.R. Jathar]{Shubham R. Jathar}
\address{ Indian Institute of Science Education and Research (IISER) Bhopal, India}
\email {shubham18@iiserb.ac.in}
\author[M. Kar]{Manas Kar}
\address{ Indian Institute of Science Education and Research (IISER) Bhopal, India}
\email{manas@iiserb.ac.in}
\author[V. P. Krishnan]{Venkateswaran P. Krishnan}
\address{
Centre for Applicable Mathematics, Tata Institute of Fundamental Research, India }
\email{vkrishnan@tifrbng.res.in}
\author[V. A. Sharafutdinov]{Vladimir A. Sharafutdinov}
\address{Sobolev Institute of Mathematics, 4 Koptyug Av., 630090, Novosibirsk, Russia}
\email{sharaf@math.nsc.ru}
\newcommand{\C}{{\mathbb C}}
\newcommand{\R}{{\mathbb R}}
\newcommand{\Z}{{\mathbb Z}}
\newcommand{\PD}{\partial}
\begin{document}

\begin{abstract}
The momentum ray transform $I_m^k$ integrates a rank $m$ symmetric tensor field $f$ on ${\R}^n$ over lines with the weight $t^k$,
$I_m^kf(x,\xi)=\int_{-\infty}^\infty t^k\langle f(x+t\xi),\xi^m\rangle\,\mathrm{d}t$.
We compute the normal operator $N_m^k=(I_m^k){}^*I_m^k$ and present an inversion formula recovering a rank $m$ symmetric tensor field $f$ from the data $(N_m^0f,\dots,N_m^mf)$.

\medskip

\noindent{\bf Keywords.}Ray transform, inverse problems, symmetric tensor fields, tensor tomography, momentum ray transform.

\noindent{\bf Mathematics Subject Classification (2020)}: Primary 44A12, Secondary 53C65.
\end{abstract}
	\maketitle
 \section{Introduction}
Let $\langle\cdot,\cdot\rangle$ be the standard dot product on ${\R}^n$ and $|\cdot|$, the corresponding norm.

For Schwartz class functions, the ray transform (also called the X-ray transform) is defined by
\begin{equation}
If(x,\xi) = \int_{-\infty}^{\infty}f(x+t\xi)\, \mathrm{d}t
                      \label{1.1}
\end{equation}
for all $(x,\xi)\in \mathbb{R}^n \times \mathbb{R}^n$ satisfying $|\xi|=1$ and $\langle x, \xi\rangle =0.$
For a Schwartz class symmetric $m$-tensor field
$f = (f_{i_1\dots i_m})$, the ray transform is defined by
\begin{equation}
\begin{split}
I_mf(x,\xi)
& = \int_{-\infty}^{\infty} f_{i_1\dots i_m}(x+t\xi)\xi^{i_1}\dots \xi^{i_m}\, \mathrm{d}t \\
& = \int_{-\infty}^{\infty} \langle f(x+t\xi), \xi^{m}\rangle\, \mathrm{d}t.
\end{split}
                    \label{1.2}
\end{equation}
We use the Einstein summation rule to sum from $1$ to $n$ over every repeated index in lower and upper positions in a monomial.
In particular, when $m=0$, the definition \eqref{1.2} coincides with \eqref{1.1} and when $m=1$, \eqref{1.2}
represents the ray transform of vector fields which is also called the Doppler transform.

In the case of $m=0$, the ray transform $If$ uniquely determines a function $f$ and there is an explicit inversion formula. However, if $m\geq 1$, the ray transform $I_m$ has a nontrivial kernel. In particular, $I(\sigma \nabla h)=0$ whenever $h$ is a smooth
symmetric $(m-1)$-tensor field on ${\R}^n$ decaying at infinity, $\nabla$ is the total covariant derivative, and $\sigma$ denotes the symmetrization of a tensor.
A symmetric $m$-tensor field $f$ sufficiently fast decaying at infinity can be uniquely decomposed
$$
f = f^s + \sigma \nabla h,\quad h(x)\to0\ \mbox{as}\ |x|\to\infty
$$
to the solenoidal (= divergence-free) part $f^s$ and potential part $\sigma \nabla h$; see \cite[Theorem 2.6.2]{sharafutdinov2012integral}  and \cite[Theorem 6.4.7]{GIP2D} for the detailed explanation in the Euclidean case as well as in the case of Riemannian manifolds. The solenoidal part of a symmetric $m$-tensor field $f$ can be uniquely determined from $I_mf$ and there is an explicit inversion formula \cite[Theorem 2.12.2]{sharafutdinov2012integral}.

It is natural to ask: what additional information required along with $I_mf$ so that one could recover the entire tensor field $f$. This leads to the notion of the momentum ray transform $I_m^k$ introduced in \cite[Section 2.17]{sharafutdinov2012integral} by
$$
\begin{aligned}
I_m^kf(x,\xi)
& = \int_{-\infty}^{\infty} t^kf_{i_1\dots i_m}(x+t\xi)\xi^{i_1}\dots \xi^{i_m}\, \mathrm{d}t \\
& = \int_{-\infty}^{\infty} t^k\langle f(x+t\xi), \xi^{m}\rangle\, \mathrm{d}t\quad(k=0,1,2.\dots)
\end{aligned}
$$
for all $(x,\xi)\in \mathbb{R}^n \times \mathbb{R}^n$ satisfying $|\xi|=1$ and $\langle x, \xi\rangle =0$.
In particular $I_m^0 = I_m$.

A rank $m$ symmetric tensor field $f$ is uniquely determined by the data $(I_m^0f, \dots, I_m^mf)$.
This was proved in \cite[Theorem 2.17.2]{sharafutdinov2012integral}.
Later this result was extended to a Helgason type support theorem for tensor fields on a simple real analytic Riemannian manifold \cite{Abhishek:Mishra:2019}.
An algorithm for recovering $f$ from the data $\left(I_m^0 f,\dots, I_m^m f\right)$ is presented in \cite{Krishnan:Manna:Sahoo:Sharafutdinov:2019}
as well as a stability estimate in (generalized) Sobolev norms. A range characterization for the operator $f \mapsto (I_m^0f,\dots, I_m^mf)$ on the Schwartz space was established in \cite{Krishnan:Manna:Sahoo:Sharafutdinov:2020}.

Let us introduce the normal operator $N_m^k=\left(I_m^k\right)^* I_m^k$, where $\left(I_m^k\right)^*$ is the $L^2$-adjoint of the momentum ray transform $I_m^k$.
Since $N_m^k$ is an averaging operator, the data $N_m^kf$ could represent a better measurement model rather than $I^k_mf$.

We derive the inversion formula
$$
f=(-\Delta)^{1/2}\sum\limits_{k=0}^m D_{m,n}^k(N_m^kf)
$$
recovering a rank $m$ tensor field $f$ from the data $(N_m^0f,\dots,N_m^mf)$.
Here $D_{m,n}^k$ are linear differential operators, on the space of rank $m$ symmetric tensor fields, defined by an explicit formula.

The ray transform has several important applications that include X-ray computerized tomography (CT) in medical imaging when $m=0.$ In the case of $m=1$, the ray transform is used in Doppler tomography to analyze vector fields. In cases where $m=2$ or $m=4$, the ray transform and its variants are applied to tomography problems in anisotropic media regarding the elasticity and Maxwell systems, see \cite{sharafutdinov2012integral} and  \cite{Lionheart:sharafutdinov:2009,Sharafutdinov:Wang:2012}. Recently, the momentum ray transform has been adopted as a solution tool for the classical Calder\'on problem for the bi-Laplace model and other higher-order operators \cite{bhattacharyya2021unique,Sahoo:Salo:2023,bhattacharyya2023local}. The unique continuation principle for $I_m$ and $I^k_m$ is proved in \cite{Agrawal:Krishnan:Sahoo:2022}. See also \cite{ilmavirta2023unique} for a related work involving a fractional momentum operator.

\section{Basic definitions}\label{sec:pre}

First of all, mostly following \cite[Chapter 2]{sharafutdinov2012integral}, we introduce some notation and definitions concerning tensor algebra and analysis which will be used throughout the article.

\subsection{Tensor algebra over \texorpdfstring{${\R}^n$}{Rn}}\label{sec:pre:1}
Let $T^m\mathbb{R}^n$ be the $n^m$-dimensional complex vector space of $m$-tensors on $\mathbb{R}^n$. In particular, $T^0\mathbb{R}^n={\C}$ and
$T^1\mathbb{R}^n={\C}^n$. We need complex tensors since we are going to use the Fourier transform.
Assuming $n$ to be fixed, the notation $T^m\mathbb{R}^n$ will be mostly abbreviated to $T^m$.
For a fixed orthonormal basis $(e_1,\ldots, e_n)$ of $\mathbb{R}^n$, by $u_{i_1\dots i_m}=u^{i_1\dots i_m}=u(e_{i_1},\ldots, e_{i_m})$ we denote {\it coordinates} (= {\it components}) of a tensor $u\in T^m$ with respect to the basis. There is no difference between covariant and contravariant tensors since we use orthonormal bases only. Given $u\in T^m$ and $v\in T^k$, {\it the tensor product} $u\otimes v\in T^{m+k}$ is defined by
$(u\otimes v)_{i_1\dots i_{m+k}}=u_{i_1\dots i_m} v_{i_{m+1}\dots i_{m+k}}.$
The standard dot product on $\mathbb{R}^n$ extends to $T^m$ by
$\langle u,v\rangle=u^{i_1\dots i_m}\overline{v_{i_1\dots i_m}}.$
Throughout the article, the Einstein summation convention is used.

Let $S^m=S^m\mathbb{R}^n$ be the $\binom{n+m-1}{m}$-dimensional subspace of $T^m$ consisting of symmetric tensors.
{\it The partial symmetrization} $\sigma(i_1\dots i_m):T^{m+k}\to T^{m+k}$ in the indices $(i_1,\dots,i_m)$ is defined by
$$
\sigma(i_1\dots i_m) u_{i_1\dots i_mj_1\dots j_k}=\frac{1}{m !} \sum_{\pi \in \Pi_m} u_{i_{\pi(1)},\dots,i_{\pi(m)}j_1\dots j_k},
$$
where the summation is performed over the group $\Pi_m$ of all substitutions of the set $\{1,\dots,m\}$. In particular,
$\sigma: T^m\rightarrow S^m$ is the symmetrization in all indices.
Given $u\in S^m$ and $v\in S^k$, {\it the symmetric product} $u v\in S^{m+k}$ is defined by $uv=\sigma(u\otimes v)$. Being equipped with the symmetric product, $S^*{\R}^n=\bigoplus\limits_{m=0}^\infty S^m{\R}^n$ becomes a commutative graded algebra that is called {\it the algebra of symmetric tensors over} ${\R}^n$. The algebra $S^*{\R}^n$ is canonically isomorphic to the algebra of polynomials on ${\R}^n$. Every statement on symmetric tensors can be translated to the langauge of polynomials, and {vice} versa.

Given $u\in S^m$, let $i_u:S^k\to S^{m+k}$ be the operator of symmetric multiplication by $u$ and let {$j_u:S^{m+k}\to S^k$} be the adjoint of $i_u$. These {operators} are written in coordinates as
\begin{align*}
\left(i_u v\right)_{i_1 \ldots i_{m+k}} & =\sigma\left(i_1 \ldots i_{m+k}\right) u_{i_1 \ldots i_m} v_{i_{m+1} \ldots i_{m+k}} \\
\left(j_u v\right)_{i_1 \ldots i_k} & =v_{i_1 \ldots i_{m+k}} u^{i_{k+1} \ldots i_{m+k}}.
\end{align*}
The tensor $j_uv$ will be also denoted by $v/u$. For the Kronecker tensor $\delta$, the notations $i_\delta$ and $j_\delta$ will be abbreviated to $i$ and $j$ respectively.

\subsection{Tensor fields}
Recall that the Schwartz space $\mathcal{S}\left(\mathbb{R}^n\right)$ is the topological vector space consisting of $C^\infty$-smooth complex-valued functions on ${\R}^n$ fast decaying at infinity together with all derivatives, equipped with the standard topology.
Let $\mathcal{S}\left(\mathbb{R}^n; S^m\right)=\mathcal{S}\left(\mathbb{R}^n\right)\otimes S^m$ be the topological vector space of smooth fast decaying  symmetric $m$-tensor fields, defined on $\mathbb{R}^n$, whose components belong to the Schwartz space. In Cartesian coordinates, such a tensor field is written as $f=(f_{i_1\dots i_m})$ with coordinates (= components) $f_{i_1\dots i_m}=f^{i_1\dots i_m}\in\mathcal{S}\left(\mathbb{R}^n\right)$ symmetric in all indices. We again emphasize that there is no difference between covariant and contravariant coordinates since we use Cartesian coordinates only.

We use {\it the Fourier transform}
$\mathcal{F}: \mathcal{S}(\mathbb{R}^n) \rightarrow \mathcal{S}(\mathbb{R}^n),$ $f \mapsto \widehat{f}$
in the form (hereafter \textsl{i} is the imaginary unit)
\[
\mathcal{F}{f}(y)=\frac{1}{(2 \pi)^{n / 2}} \int_{\mathbb{R}^n} e^{-\textsl{i}\langle y, x\rangle} f(x)\, \mathrm{d} x.
\]
The Fourier transform $\mathcal{F}: \mathcal{S}\left(\mathbb{R}^n; S^m\right) \rightarrow \mathcal{S}\left(\mathbb{R}^n; S^m\right)$, $f \mapsto \widehat{f}$ of symmetric tensor fields is defined component-wise:
$\widehat{f}_{i_1 \ldots i_m}=\widehat{f_{i_1 \ldots i_m}}$.

Besides $\mathcal{S}\left({\R}^n; S^m\right)$, we use some other spaces of tensor fields. In particular,
$C^\infty\left(U; T^m\right)$ is the space of smooth $m$-tensor fields on an open set $U\subset{\R}^n$.
See details in
\cite[Section 2.1]{sharafutdinov2012integral}.

The $L^2$-product on $C_0^\infty\left({\R}^n; T^m\right)$ is defined by
\begin{equation}
(f,g)_{L^2({\R}^n; T^m)}=\int_{{\R}^n}\langle f(x),g(x)\rangle\,dx.
                               \label{2.1}
\end{equation}

\subsection{Inner derivative and divergence}
The first order differential operator
$$
d:C^\infty({\R}^n;S^m)\to C^\infty({\R}^n;S^{m+1})
$$
defined by
$$
(df)_{i_1\dots i_{m+1}}=\sigma(i_1\dots i_{m+1})\frac{\partial f_{i_1\dots i_m}}{\partial x^{i_{m+1}}}
=\frac{1}{m+1}\Big(\frac{\partial f_{i_2\dots i_{m+1}}}{\partial x^{i_1}}+\dots
+\frac{\partial f_{i_1\dots i_m}}{\partial x^{i_{m+1}}}\Big)
$$
is called {\it the inner derivative}.

{\it The divergence}
$$
\mbox{div}:C^\infty({\R}^n;S^{m+1})\to C^\infty({\R}^n;S^m)
$$
is defined by
$$
(\mbox{div}\,f)_{i_1\dots i_m}=\delta^{jk}\frac{\partial f_{i_1\dots i_mj}}{\partial x^k}.
$$
The operators $d$ and $-\mbox{div}$ are formally adjoint to each other with respect to the $L^2$-product \eqref{2.1}. The divergence is denoted by $\delta$ in \cite{sharafutdinov2012integral}. But we will always use the notation $\mbox{div}$ since some our formulas involve the divergence and Kronecker tensor simultaneously.

\subsection{\texorpdfstring{The space ${\mathcal S}(T{\mathbb S}^{n-1})$}{The space S(TS^{n-1})}}

The Schwartz space ${\mathcal S}(E)$ is well defined for a smooth vector bundle $E\to M$ over a compact manifold using a finite atlas and a partition of unity subordinate to the atlas.
In particular, the Schwartz space ${\mathcal S}(T{\mathbb S}^{n-1})$ is well defined for the tangent bundle
$$
T\mathbb S^{n-1}= \{(x, \xi) \in \mathbb{R}^n \times \mathbb{S}^{n-1}:\langle x, \xi\rangle=0\}\to\mathbb{S}^{n-1},\quad(x,\xi)\mapsto\xi
$$
of the unit sphere $\mathbb S^{n-1}=\{x\in{\R}^n:|x|=1\}$.

The Fourier transform $\mathcal{F}: \mathcal{S}\left(T \mathbb{S}^{n-1}\right) \rightarrow \mathcal{S}\left(T \mathbb{S}^{n-1}\right), \varphi \mapsto \widehat{\varphi}$ is defined by
$$
\mathcal{F}{\varphi}(y, \xi)=\frac{1}{(2 \pi)^{(n-1)/2}} \int_{\xi^{\perp}} e^{-\textsl{i}\langle y, x\rangle} \varphi(x, \xi)\,\mathrm{d} x,
$$
where $\mathrm{d} x$ is the $(n-1)$-dimensional Lebesgue measure on the hyperplane $\xi^{\perp}=\left\{x \in \mathbb{R}^n ; \right.$ $\langle\xi, x\rangle=0\}$. Notice that it is the standard Fourier transform in the $(n-1)$-dimensional variable $x$ while $\xi \in \mathbb{S}^{n-1}$ is considered as a parameter.

The $L^2$-product on ${\mathcal S}(T{\mathbb S}^{n-1})$ is defined by
\begin{equation}
(\varphi,\psi)_{L^2(T\mathbb{S}^{n-1})}=\int\limits_{\mathbb{S}^{n-1}}\int\limits_{\xi^\bot}
\varphi(x,\xi)\overline{\psi(x,\xi)}\,\mathrm{d} x\,\mathrm{d}\xi,
                              \label{2.1a}
\end{equation}
where $\mathrm{d}\xi$ is the $(n-1)$-dimensional Euclidean volume form on the unit sphere $\mathbb{S}^{n-1}$.

\subsection{Ray transforms}
It is convenient to parameterize the family of oriented lines in ${\R}^n$ by points of the manifold $T \mathbb{S}^{n-1}$. Namely, a point
$(x,\xi)\in T \mathbb{S}^{n-1}$ determines the line $\{x+t\xi:t\in{\R}\}$ through $x$ in the direction $\xi$.

{\it The ray transform}
$$
I_m:\mathcal{S}({\R}^n;S^m)\to\mathcal{S}\left(T \mathbb{S}^{n-1}\right)
$$
is the linear continuous operator defined by
$$
I_m f(x, \xi)=\int_{\mathbb{R}} f_{i_1 \dots i_m}(x+t \xi) \xi^{i_1} \dots \xi^{i_m} \mathrm{~d} t=\int_{\mathbb{R}}\left\langle f(x+t \xi), \xi^m\right\rangle \mathrm{d} t.
$$
The ray transform is related to the Fourier transform by the important formula \cite[formula 2.1.15]{sharafutdinov2012integral}
\begin{equation}
\widehat{I_mf}(y,\xi)=(2\pi)^{1/2}\langle \widehat f(y),\xi^m\rangle\quad\big((y,\xi)\in T \mathbb{S}^{n-1}\big),
                              \label{2.2}
\end{equation}
where $\widehat{I_mf}$ is the Fourier transform of the function $I_mf\in\mathcal{S}\left(T \mathbb{S}^{n-1}\right)$.

For $0\le k\le m$, {\it the momentum ray transform}
$$
I_m^k:\mathcal{S}({\R}^n;S^m)\to\mathcal{S}\left(T \mathbb{S}^{n-1}\right)
$$
is the linear continuous operator defined by
\begin{equation}
I_m^k f(x, \xi)=\int_{\mathbb{R}}t^k f_{i_1 \dots i_m}(x+t \xi) \xi^{i_1} \dots \xi^{i_m} \mathrm{~d} t
=\int_{\mathbb{R}}t^k\left\langle f(x+t \xi), \xi^m\right\rangle \mathrm{d} t.
                                  \label{2.3}
\end{equation}
The formula \eqref{2.2} is generalized as follows \cite[formula (2.9)]{Krishnan:Manna:Sahoo:Sharafutdinov:2019}:
$$
\widehat{I_m^kf}(y,\xi)=(2\pi)^{1/2}{\textsl i}^k\left\langle d^k\widehat f(y),\xi^{m+k}\right\rangle
\quad\big((y,\xi)\in T \mathbb{S}^{n-1}\big).
$$
As we will see later, $I_m^k$ should be considered together with lower degree operators $I_m^0,\dots, I_m^{k-1}$, i.e., the collection
$(I_m^0f,\dots, I_m^kf)$ represents more convenient information about $f$ than $I_m^kf$ alone.

\subsection{Normal operators}
The formal adjoint of the ray transform $I_m$ with respect to $L^2$-products  \eqref{2.1} and  \eqref{2.1a}
\[
I_m^*: \mathcal{S}\left(T \mathbb{S}^{n-1}\right) \rightarrow C^\infty\left(\mathbb{R}^n ; S^m\right)
\]
is expressed by
\[
\left(I_m^*\varphi\right)_{i_1 \dots i_m} (x)=\int_{\mathbb{S}^{n-1}} \xi_{i_1} \dots \xi_{i_m}
\varphi\big(x-\langle x, \xi\rangle \xi, \xi\big)\, \mathrm{d}\xi.
\]
We emphasize that, for $\varphi \in \mathcal{S}(T \mathbb{S}^{n-1})$, the tensor field $I_m^*\varphi$ does not need to decay fast at infinity. 

Similarly, the formal $L^2$-adjoint of the momentum ray transform
$I_m^k$
\[
\left(I_m^k\right)^*: \mathcal{S}\left(T \mathbb{S}^{n-1}\right) \rightarrow C^\infty\left(\mathbb{R}^n ; S^m\right)
\]
is expressed by
\begin{equation}
\big((I_m^k)^*\varphi\big)_{i_1 \ldots i_m}(x)
=\int_{\mathbb{S}^{n-1}}\langle x, \xi\rangle^k \xi_{i_1} \dots \xi_{i_m} \varphi\big(x-\langle x, \xi\rangle \xi, \xi\big)\, \mathrm{d}\xi.
                                  \label{2.4}
\end{equation}
Let
$$
N_m=I_m^*I_m: \mathcal S\left(\mathbb{R}^n ; S^m\right) \rightarrow C^\infty\left(\mathbb{R}^n ; S^m\right)
$$
be the normal operator for the ray transform $I_m$.
Similarly, let
$$
N_m^k=(I_m^k)^*I_m^k: \mathcal S\left(\mathbb{R}^n ; S^m\right) \rightarrow C^\infty\left(\mathbb{R}^n ; S^m\right)
$$
be the normal operator for the momentum ray transform $I_m^k$.

Given $f\in\mathcal S\left(\mathbb{R}^n ; S^m\right)$, the tensor field $N_m^kf$ does not grow too fast at infinity, i.e., the estimate
$$
|N_m^kf(x)|\le C(1+|x|)^N
$$
holds with some constants $C$ and $N$. In particular, $N_m^kf$ can be considered as a tempered tensor field-distribution, i.e.,
$N_m^kf\in{\mathcal S}'\left(\mathbb{R}^n ; S^m\right)$. Hence the Fourier transform
$\widehat{N_m^kf}\in{\mathcal S}'\left(\mathbb{R}^n ; S^m\right)$ is well defined at least in the distribution sense. We will show that, for $f\in\mathcal S\left(\mathbb{R}^n ; S^m\right)$, the restriction of $\widehat{N_m^kf}$ to ${\R}^n\setminus\{0\}$ belongs to
$C^\infty\left(\mathbb{R}^n\setminus\{0\}; S^m\right)$.

The operator $N_m$ was computed in \cite[formula 2.11.3]{sharafutdinov2012integral} where the notation $\mu^m$ was used instead of $I_m^*$.
In Section 4, we will derive a similar formula for $N_m^k$. In \cite{Agrawal:Krishnan:Sahoo:2022}, a similar expression for the normal operator is considered to study the unique continuation principle for momentum ray transforms.

\section{The inversion formula}

Let $\Gamma$ be Euler's Gamma function and let the operator $(-\Delta)^{1/2}$ be defined defined via Fourier transform by $|y|{\mathcal F}={\mathcal F}(-\Delta)^{1/2}$.
We use the definition
$$
(2l+1)!!=1\cdot3\cdots(2l+1),\quad (-1)!!=1.
$$

\begin{theorem} \label{Th3.1}
Given integers $m\ge0$ and $n\ge2$,
 a tensor field $f\in \mathcal{S}\left(\mathbb{R}^n; S^m\right)$ is recovered from the data $(N_m^0f,\dots,N_m^mf)$
by the inversion formula
\begin{equation}
f(x)=(-\Delta)^{1/2}\sum\limits_{k=0}^m D_{m,n}^k(N_m^kf)(x),
                           \label{3.1}
\end{equation}
where the linear differential operator of order $m+k$
$$
D_{m,n}^k:C^\infty({\R}^n;S^m)\to C^\infty({\R}^n;S^m)
$$
is defined by
\begin{align}\label{3.2}
    \begin{split}
        D_{m,n}^k&=c_{m,n}^k\sum\limits_{p=k}^m(n\!+\!2m\!-\!2p\!-\!3)!!
\!\!\!\\&\qquad\qquad\times\sum\limits_{q=0}^{\min(p,m-p,p-k)}\!\!\!\!
\frac{(-1)^q}{2^q q! (m\!-\!p\!-\!q)! (p\!-\!k\!-\!q)!}
\, d^{p-q}\, i^q\, j^q\, j_x^{p-k-q}\, \mbox{\rm div}^k
    \end{split}
\end{align}
with the coefficient
\begin{equation}
c_{m,n}^k=\frac{(-1)^k}{(k!)^2}\,\frac{2^{m-2}\, \Gamma\big(\frac{2m+n-1}{2}\big)}{\pi^{(n+1)/2}\, (n+2m-3)!!}
                           \label{3.3}
\end{equation}
and the operators \(i\), \(j\), and \(j_x\) are defined in Section \ref{sec:pre:1}.
\end{theorem}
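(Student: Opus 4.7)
The plan is to work entirely on the Fourier side and reduce (3.1) to an algebraic identity of tensor-valued functions of $y\in\mathbb{R}^n\setminus\{0\}$. As a preparatory step (presumably the content of the promised Section 4) I first derive an explicit formula for $\widehat{N_m^k f}(y)$. Starting from the slice identity $\widehat{I_m^k f}(y,\xi)=(2\pi)^{1/2}\textsl{i}^k\langle d^k\widehat f(y),\xi^{m+k}\rangle$ and applying Plancherel in the $\xi^\perp$ variable together with Parseval on $\mathbb{R}^n$, and then exchanging the order of integration between $(\xi\in\mathbb{S}^{n-1},\ y\in\xi^\perp)$ and $(y\in\mathbb{R}^n,\ \xi\in\mathbb{S}^{n-1}\cap y^\perp)$, I obtain an expression of the form
\begin{equation*}
\widehat{N_m^k f}(y)=\frac{2\pi}{|y|}\int_{\mathbb{S}^{n-1}\cap y^\perp}\xi^{m+k}\otimes\overline{(\xi\cdot\nabla)^k\langle\widehat f(y),\xi^m\rangle}\,\mathrm{d}\xi,
\end{equation*}
the factor $|y|^{-1}$ being the Jacobian of the change of variables. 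The spherical moments $\int_{\mathbb{S}^{n-1}\cap y^\perp}\xi^{\otimes N}\,\mathrm{d}\xi$ are then evaluated as symmetric tensor polynomials in the Kronecker tensor $\delta$ and the unit vector $\hat y=y/|y|$; this is where the double factorials appearing in (3.3) enter.

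Next I would take the Fourier transform of (3.1). The symbols of the operators in (3.2) are transparent: $(-\Delta)^{1/2}$ becomes multiplication by $|y|$; $d$ and $\mbox{div}$ become, up to factors of $\textsl{i}$, symmetric multiplication by $y$ and contraction by $y$; the algebraic operators $i$ and $j$ commute with $\mathcal{F}$; and $j_x$, contraction with the position vector, becomes differentiation in $y$. Substituting the formula for $\widehat{N_m^k f}$ turns the right-hand side of (3.1) into a finite sum of tensor-algebraic and differential terms acting on $\widehat f$, and the theorem reduces to the claim that this sum collapses to $\widehat f(y)$.

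The main obstacle is the combinatorial identity that forces this collapse. After distributing the $y$-differentiations coming from $j_x$ either onto $|y|^{-1}$ or onto $(\xi\cdot\nabla)^k\widehat f$, and using the closed-form expressions for the spherical moments in terms of symmetric products of $\delta$ and $\hat y$, the resulting object is a triple sum over indices $(k,p,q)$ with coefficients involving $(-1)^{k+q}$, double factorials, and inverse factorials. I expect the inner sum over $q$ to collapse via a Vandermonde-type identity of the form $\sum_q(-1)^q/\bigl(q!(p-k-q)!(m-p-q)!\bigr)$ producing a Kronecker symbol, and the remaining sum over $p$ to telescope thanks to the precise choice of $c_{m,n}^k$ in (3.3). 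A cleaner alternative would be an induction on $m$ using the intertwining relations between $I_m^k$, $d$, and $\mbox{div}$, which reduce the rank-$m$ inversion to the rank-$(m-1)$ case together with a lower-triangular linear system in the index $k$. Either way, the combinatorial verification is the principal technical difficulty; steps (i)--(iii) are systematic, but the bookkeeping that turns the quadruple sum of Fourier-side operators into the identity on $\widehat f$ is where the real work lies.
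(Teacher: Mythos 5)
Your overall strategy (pass to the Fourier side, compute $\widehat{N_m^kf}$, and verify an algebraic identity in $y$) is the same as the paper's, but the two steps on which everything rests are, respectively, incorrect as stated and not actually carried out.

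First, the formula you propose for $\widehat{N_m^kf}$ cannot be right: $\xi^{m+k}\otimes\overline{(\xi\cdot\nabla)^k\langle\widehat f(y),\xi^m\rangle}$ has rank $m+k$, while $\widehat{N_m^kf}$ must have rank $m$; and, more seriously, the derivation does not go through. After Plancherel, the bilinear form $(N_m^kf,h)$ becomes an integral of $\langle d^k\widehat f(y),\xi^{m+k}\rangle\,\overline{\langle d^k\widehat h(y),\xi^{m+k}\rangle}$ over $y\in\xi^\perp$, and the $k$ derivatives sitting on the \emph{test function} $\widehat h$ act in the direction $\xi$, i.e.\ normal to the hyperplane of integration. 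They cannot be integrated by parts inside $\xi^\perp$, and after exchanging the order of integration they hit both the Jacobian $|y|^{-1}$ and the $y$-dependent sphere $\mathbb{S}^{n-1}\cap y^\perp$, producing a whole family of extra terms. The correct expression (the paper's formula (4.5), obtained instead from the convolution representation of $N_m^k$ in Proposition \ref{P4.1} together with ${\mathcal F}[|x|^{\lambda}]$) is a sum over $l=0,\dots,k$ of terms with \emph{different} homogeneities $|y|^{2m+2l-1}$; only the contraction $j_y^k\widehat{N_m^kf}$ collapses to a single clean term (formula (4.7)), because the Euler homogeneity relation kills all summands with $l>0$. Your single-term ansatz is the $k=0$ formula extrapolated to $k\ge1$, and it misses precisely the structure that makes the problem nontrivial.

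Second, the "combinatorial identity that forces the collapse" is stated as an expectation ("I expect the inner sum over $q$ to collapse via a Vandermonde-type identity\ldots the remaining sum over $p$ to telescope"), not proved. In the paper this is the bulk of the work: the PDE system $A^{(m,0)}/(d^kg)=F^{(m,k)}$ is first reduced to the algebraic system $A^{(m,k)}/g=H^{(m,k)}$ using the relations $j_yA^{(m,k)}=-kA^{(m-1,k-1)}$ and $\operatorname{div}A^{(m,k)}=(2m-2k-1)(n+2m-2k-3)A^{(m,k+1)}$ (Section 5), then inverted by a delicate induction on $m$ (Lemma \ref{L6.2}) with recursively defined coefficients $\beta(m,k,p)$ that only afterwards admit a closed form, and finally rewritten using the commutator formula $d^ki_y^l=\sum_p\binom{k}{p}\frac{l!}{(p-k+l)!}i_y^{p-k+l}i^{k-p}d^p$ (Section 7). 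None of this is a one-line Vandermonde identity, and your proposal defers exactly this content. As it stands the proposal is a plausible plan whose first concrete formula is wrong and whose decisive identity is unproven.
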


The appearance of $(-\Delta)^{1/2}$ in the inversion formula  \eqref{3.1} needs some justification. Indeed, according to the definition
$(-\Delta)^{1/2}u={\mathcal F}^{-1}(|y|\widehat u)$, the tensor field $(-\Delta)^{1/2}u$ is well defined if $|y|\widehat u(y)$ belongs to the domain of the Fourier transform. We will show in the next section that the tensor field $u=\sum_kD_{m,n}^k(N_m^kf)$ is such that $|y|\widehat u$ can be interpreted as a tempered distribution. Therefore the right-hand side of \eqref{3.1} is well defined.

Let us write down the inversion formula \eqref{3.1} in the cases of $m=1$ and $m=2$.

For a vector field $f\in \mathcal{S}\left(\mathbb{R}^n; {\C}^n\right)\ (n\ge2)$,
$$
f(x)=\frac{\Gamma\big(\frac{n+1}{2}\big)}{2\pi^{(n+1)/2}}
(-\Delta)^{1/2}\Big(N_1^0f+\frac{1}{n\!-\!1}\,d\,j_x\,(N_1^0f)
-\frac{1}{n\!-\!1}\,\,d\,\mbox{\rm div}\,(N_1^1f)\Big).
$$

For a second rank tensor field $f\in \mathcal{S}(\mathbb{R}^n;S^2)$,
$$
\begin{aligned}
                f(x)=\frac{\Gamma\big(\frac{n+3}{2}\big)}{2 \pi^{(n+1)/2}}\,(-\Delta)^{1/2}\bigg[&
                N_2^0f-\frac{1}{n+1}\,ij\,N_2^0f\\
                &+\frac{2}{n+1}\,d\Big(j_x\,N_2^0f-\mbox{div}\,N_2^1f\Big)\\
                &+\frac{1}{(n\!-\!1)(n\!+\!1)}\,d^2\Big(
                j_x^2N_2^0f
                -2\,j_x\,\mbox{div}\,N_2^1f
                +\frac{1}{2}\,\mbox{div}^2\,N_2^2f
                \Big)\bigg].
\end{aligned}
$$

The rest of the article is mostly devoted to the proof of Theorem \ref{Th3.1}. Now, we discuss the scheme of the proof.

We introduce the tensor fields
\begin{equation}
A^{(m,k)}\in C^\infty({\R}^n\setminus\{0\};S^{2m-k})\quad(0\le k\le m)
                           \label{3.4}
\end{equation}
by
\begin{equation}
A^{(m,k)}(y)=d^{2m-k}|y|^{2m-2k-1}.
                           \label{3.5}
\end{equation}
These tensor fields play an important role in all our arguments.

In the next section, we compute the normal operators $N_m^k\ (0\le k\le m)$ and show that a tensor field $f\in{\mathcal S}({\R}^n;S^m)$ satisfies $A^{(m,0)}/(d^k\widehat f)=F^{(m,k)}$, where the tensor field
$F^{(m,k)}\in C^\infty({\R}^n\setminus\{0\};S^{m-k})$ is defined by
\begin{equation}
F^{(m,k)}(y)=\frac{2^{m-2}(2m-1)!!\,\Gamma\big(\frac{2m\!+\!n\!-\!1}{2}\big)}{\pi^{(n+1)/2}\,k!}\,j_y^k\,\widehat{N_m^kf}(y).
                           \label{3.6}
\end{equation}
To avoid proliferation of the $\widehat{\cdot}$ symbol, we denote $g(y)=\widehat f(y)$
and write the equation $A^{(m,0)}/(d^k\widehat f)=F^{(m,k)}$ in the form
\begin{equation}
A^{(m,0)}/(d^kg)=F^{(m,k)}\quad(0\le k\le m).
                           \label{3.7}
\end{equation}
Given the data $(F^{(m,0)},\dots,F^{(m,m)})$, we consider \eqref{3.7} as a system of linear equations for the unknown tensor field $g$.

The first equation of the system  \eqref{3.7}
$$
A^{(m,0)}(y)/g(y)=F^{(m,0)}(y)\quad(y\in{\R}^n\setminus\{0\})
$$
is a pure algebraic equation. More precisely, being written in coordinates, it constitutes a system of linear algebraic equations in the components of the tensor $g(y)$ with coefficients depending on $y$. The system was considered in \cite[Theorem 2.12.1]{sharafutdinov2012integral}
where the tensor field
$\varepsilon^m(y)=\frac{|y|}{((2m-1)!!)^2}\,d^{2m}|y|^{2m-1}$ was used instead of $A^{(m,0)}$. It allows to determine the {\it tangential part} of the tensor field $g$ which corresponds to the solenoidal part of $f={\mathcal F}^{-1}g$ (see  \cite[Section 2.6]{sharafutdinov2012integral} for the definition of the tangential part).

The second equation of the system \eqref{3.7}, $A^{(m,1)}/(dg)=F^{(m,1)}$, constitutes a system of linear first order PDEs in components of the tensor field $g$, the third equation constitutes a system of linear second order PDEs, and so on.

The system \eqref{3.7} can be reduced to the purely algebraic system
\begin{equation}
A^{(m,k)}/g=H^{(m,k)}\quad(0\le k\le m)
                           \label{3.8}
\end{equation}
with the right-hand side defined by
\begin{equation}
H^{(m,k)}=\frac{(2m-2k-1)!!(n+2m-2k-3)!!}{(2m-1)!!(n+2m-3)!!}
\sum\limits_{p=0}^k(-1)^p\binom{k}{p} \,\text{div}^{k-p}\,F^{(m,p)}.
                           \label{3.9}
\end{equation}
The reduction is presented in Section 5. The precise sense of the reduction is expressed by Proposition \ref{P5.2} below, see also the paragraph after Proposition \ref{P5.2}.

Some consistency conditions should be imposed on right-hand side $H^{(m,k)}$ for solvability  of the system \eqref{3.9}. In the case of a general $m$, it is not easy to write down the consistency conditions explicitly. Fortunately, we do not need to know the consistency conditions; in our setting, the system \eqref{3.9} has a solution by Propositions \ref{P4.4} and \ref{P5.2} presented below. If the system \eqref{3.9} has a solution, then the solution is unique and is expressed by \eqref{3.5} with $\widehat f=g$. This fact is proved in Section~6.

\section{Normal operator}\label{sec:norm}

We start with the proof of \eqref{2.4}. For $f\in{\mathcal S}({\R}^n;S^m)$ and $\varphi\in{\mathcal S}(T{\mathbb S}^{n-1})$,
\begin{equation}
\begin{aligned}
(I_m^kf,\varphi)_{L^2(T{\mathbb S}^{n-1})}
&=\int\limits_{{\mathbb S}^{n-1}}\int\limits_{\xi^\bot}(I_m^kf)(x,\xi)\,\overline{\varphi(x,\xi)}\,{\mathrm d}x\,{\mathrm d}\xi\\
&=\int\limits_{{\mathbb S}^{n-1}}\int\limits_{\xi^\bot}\int\limits_{-\infty}^\infty
t^k\langle f(x'+t\xi),\xi^m\rangle\,\overline{\varphi(x',\xi)}\,{\mathrm d}t\,{\mathrm d}x'\,{\mathrm d}\xi.
\end{aligned}
                           \label{4.1}
\end{equation}
We transform the inner integral by the change $x=x'+t\xi$ of integration variables
$$
\begin{aligned}
\int\limits_{\xi^\bot}\int\limits_{-\infty}^\infty
t^k\langle f(x'+t\xi),\xi^m\rangle\,\overline{\varphi(x',\xi)}\,&{\mathrm d}t\,{\mathrm d}x'
=\int\limits_{{\R}^n}\langle x,\xi\rangle^k\langle f(x),\xi^m\rangle\,
\overline{\varphi(x-\langle x,\xi\rangle\xi,\xi)}\,{\mathrm d}x\\
&=\int\limits_{{\R}^n}\langle x,\xi\rangle^k f^{i_1\dots i_m}(x)\,\xi_{i_1}\dots\xi_{i_m}\,
\overline{\varphi(x-\langle x,\xi\rangle\xi,\xi)}\,{\mathrm d}x.
\end{aligned}
$$
Substituting this expression into \eqref{4.1}, we obtain
$$
\begin{aligned}
(I_m^kf,\varphi)_{L^2(T{\mathbb S}^{n-1})}
&=\int\limits_{{\R}^n} f^{i_1\dots i_m}(x)\,
\overline{\int\limits_{{\mathbb S}^{n-1}}\langle x,\xi\rangle^k\xi_{i_1}\dots\xi_{i_m}\,
\varphi(x-\langle x,\xi\rangle\xi,\xi)\,{\mathrm d}\xi}\,{\mathrm d}x\\
&=(f,(I_m^k)^*\varphi)_{L^2({\R}^n;S^m)}.
\end{aligned}
$$
This proves \eqref{2.4}.

Recall that $N_m^k=(I_m^k)^*I_m^k$ is the normal operator for the momentum ray transform.

\begin{proposition} \label{P4.1}
Let $0\leq k\leq m$ and $n\ge2$.
For a tensor field $f\in \mathcal{S}\left(\mathbb{R}^n; S^m\right)$,
\begin{equation}
(N_m^kf)_{i_1\dots i_m}(x) =2\sum_{l=0}^k\binom{k}{l}
(x^{k+l}f)^{j_1\dots j_mp_1\dots p_{k+l}} \ast
\frac{(x^{2m+k+l})_{i_1 \dots i_mj_1 \dots j_mp_1\dots p_{k+l}}}{|x|^{2m+2l+n-1}},
                           \label{4.2}
\end{equation}
where $\ast$ denotes the convolution.
\end{proposition}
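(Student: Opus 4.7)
The plan is to substitute the explicit formulas \eqref{2.3} for $I_m^k$ and \eqref{2.4} for $(I_m^k)^*$ into $N_m^kf=(I_m^k)^*I_m^kf$ and convert the resulting integral over $T\mathbb{S}^{n-1}\times\mathbb{R}$ into a convolution integral over $\mathbb{R}^n$. After plugging in \eqref{2.4} and expanding $I_m^kf$ at the argument $(x-\langle x,\xi\rangle\xi,\xi)$, I would make the substitution $s=t-\langle x,\xi\rangle$ inside the defining integral of $I_m^kf$, so that $x-\langle x,\xi\rangle\xi+t\xi=x+s\xi$, and exploit the key identity $s+\langle x,\xi\rangle=\langle x+s\xi,\xi\rangle$. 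Setting $y=x+s\xi$ produces the compact form
$$
(N_m^kf)_{i_1\dots i_m}(x)=\int_{\mathbb{S}^{n-1}}\!\!\int_{\mathbb{R}}\langle x,\xi\rangle^k\langle y,\xi\rangle^k\,\xi_{i_1}\dots\xi_{i_m}\xi^{j_1}\dots\xi^{j_m}\,f_{j_1\dots j_m}(y)\,\mathrm{d}s\,\mathrm{d}\xi.
$$

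Next I would observe that the integrand is invariant under $(s,\xi)\mapsto(-s,-\xi)$: the factor $\langle x,\xi\rangle^k\langle y,\xi\rangle^k$ picks up $(-1)^{2k}$, the monomial in $\xi$ picks up $(-1)^{2m}$, and $y=x+s\xi$ is unchanged. Restricting to $s>0$, introducing a factor of $2$, and passing to polar coordinates centered at $x$ with $\xi=(y-x)/|y-x|$, $s=|y-x|$, and $\mathrm{d}s\,\mathrm{d}\xi=|y-x|^{-(n-1)}\,\mathrm{d}y$ yields
$$
(N_m^kf)_{i_1\dots i_m}(x)=2\int_{\mathbb{R}^n}\frac{\langle x,y-x\rangle^k\langle y,y-x\rangle^k\,(y-x)_{i_1}\dots(y-x)_{i_m}(y-x)^{j_1}\dots(y-x)^{j_m}}{|y-x|^{2m+2k+n-1}}\,f_{j_1\dots j_m}(y)\,\mathrm{d}y.
$$

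Finally, I would use the identity $\langle x,y-x\rangle=\langle y,y-x\rangle-|y-x|^2$, which follows directly from $y=(y-x)+x$, together with the binomial expansion
$$
\langle x,y-x\rangle^k=\sum_{l=0}^k\binom{k}{l}(-1)^l\,|y-x|^{2l}\,\langle y,y-x\rangle^{k-l}.
$$
The factors $|y-x|^{2l}$ reduce the exponent in the denominator to $2m+2k-2l+n-1$; after reindexing $l\mapsto k-l$ and using $\langle y,y-x\rangle=-\langle y,x-y\rangle$ (the $2m$ factors $(y-x)=-(x-y)$ contribute no sign), all accumulated signs consolidate to $+1$, giving
$$
(N_m^kf)_{i_1\dots i_m}(x)=2\sum_{l=0}^k\binom{k}{l}\int_{\mathbb{R}^n}\frac{\langle y,x-y\rangle^{k+l}\,(x-y)_{i_1}\dots(x-y)_{i_m}(x-y)^{j_1}\dots(x-y)^{j_m}}{|x-y|^{2m+2l+n-1}}\,f_{j_1\dots j_m}(y)\,\mathrm{d}y.
$$
Writing $\langle y,x-y\rangle^{k+l}=y^{p_1}\dots y^{p_{k+l}}(x-y)_{p_1}\dots(x-y)_{p_{k+l}}$ exhibits the $y$-integral as the convolution $(x^{k+l}f)*K_l$ with kernel $K_l(z)=z^{2m+k+l}/|z|^{2m+2l+n-1}$, which is exactly \eqref{4.2}. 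The main point of caution is the careful sign and reindexing bookkeeping in this last step; the rest is a direct change of variables from $(s,\xi)$ to $y$.
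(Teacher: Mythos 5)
Your proposal is correct and follows essentially the same route as the paper: compose $(I_m^k)^*$ with $I_m^k$, shift the line parameter so the argument becomes $x+s\xi$, symmetrize to restrict to $s>0$, pass to Cartesian coordinates, and expand binomially to identify the convolutions. The only (minor, and in fact tidier) difference is that by observing $t^k=\langle x+s\xi,\xi\rangle^k$ you keep the weight in the closed product form $\langle x,\xi\rangle^k\langle y,\xi\rangle^k$ and need just one binomial expansion of $\langle x,y-x\rangle=\langle y,y-x\rangle-|y-x|^2$, which sidesteps the paper's auxiliary algebraic identity (Lemma \ref{L4.1}).
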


The right-hand side of \eqref{4.2} needs the following comment. For $x\in{\R}^n$, according to our definition of the symmetric product,
$x^{k+l}\in S^{k+l}$ with coordinates $(x^{k+l})^{p_1\dots p_{k+l}}=x^{p_1}\dots x^{p_{k+l}}$. Therefore, for $f\in S^m$,
$$
(x^{k+l}f)^{j_1\dots j_mp_1\dots p_{k+l}}=
\sigma(j_1\dots j_mp_1\dots p_{k+l})(x^{p_1}\dots x^{p_{k+l}}f^{j_1\dots j_m}).
$$

Before proving Proposition \ref{P4.1}, we observe that it implies some regularity of the tensor field $N_m^kf$. Indeed, the first factor
$(x^{k+l}f)^{j_1\dots j_mp_1\dots p_{k+l}}$ on the right-hand side of \eqref{4.2} belongs to ${\mathcal S}({\R}^n)$. The second factor is a function locally summable over ${\R}^n$ and bounded for $|x|\ge1$. Hence the second factor can be considered as an element of the space ${\mathcal S}'({\R}^n)$ of tempered distributions. As well known \cite{Vlad79}, for $u\in{\mathcal S}({\R}^n)$ and
$v\in{\mathcal S}'({\R}^n)$, the convolution $u\ast v$ is defined and belongs to the space of smooth functions whose every derivative increases at most as a polynomial at infinity. In this case the standard formula is valid: $\widehat{u\ast v}=(2\pi)^{n/2}\,\widehat u\widehat v$. Thus, we can state that
$$
N_m^k:\mathcal{S}\left(\mathbb{R}^n; S^m\right)\to C^\infty\left(\mathbb{R}^n; S^m\right)
$$
is a continuous operator.

To prove Proposition \ref{P4.1} we need the following

\begin{lemma}   \label{L4.1}
Let $k\ge0$ be an integer, $0\neq a\in\R$, and $b\in\R$. Then
\begin{align*}
    \sum_{l=0}^k(-1)^l\binom{k}{l} \frac{(a^2+b)^{2k-l}}{a^{2k-2l}}=\sum_{l=0}^k\binom{k}{l}\frac{b^{k+l}}{a^{2l}}.
\end{align*}
\end{lemma}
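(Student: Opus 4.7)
The plan is to evaluate both sides in closed form and observe that each collapses to the common expression $b^k(a^2+b)^k/a^{2k}$. Both sums are disguised binomial expansions, so the whole argument reduces to two applications of the binomial theorem, with the hypothesis $a\neq 0$ entering only to legalize division by powers of $a$.

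For the right-hand side, I would factor out $b^k$ and rewrite the remaining sum as $\sum_{l=0}^{k}\binom{k}{l}(b/a^2)^l$, which by the binomial theorem equals $(1+b/a^2)^k=(a^2+b)^k/a^{2k}$. This produces the claimed closed form $b^k(a^2+b)^k/a^{2k}$ immediately.

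For the left-hand side, the key rewrite is
\[
\frac{(a^2+b)^{2k-l}}{a^{2k-2l}}=\frac{(a^2+b)^k}{a^{2k}}\cdot a^{2l}(a^2+b)^{k-l},
\]
which pulls the $l$-independent factor $(a^2+b)^k/a^{2k}$ outside the sum. The residual sum $\sum_{l=0}^{k}(-1)^l\binom{k}{l}a^{2l}(a^2+b)^{k-l}$ is recognized as the binomial expansion of $\big((a^2+b)-a^2\big)^k=b^k$, yielding the same closed form.

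There is no genuine obstacle here. The only point worth noting is that the sign pattern $(-1)^l$ on the left is precisely what is needed so that the inner binomial sum takes the form $(X-Y)^k$ rather than $(X+Y)^k$; this is the mechanism by which the two structurally different sums produce the same value.
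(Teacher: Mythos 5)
Your proof is correct and follows essentially the same route as the paper: both arguments pull the factor $(a^2+b)^k/a^{2k}$ out of the left-hand sum, recognize the remainder as the binomial expansion of $\bigl((a^2+b)-a^2\bigr)^k=b^k$, and identify the right-hand side with the same closed form $b^k(a^2+b)^k/a^{2k}$ via a second application of the binomial theorem.
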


\begin{proof}
By the binomial formula,
\begin{align*}
\sum_{l=0}^k (-1)^l\binom{k}{l} \frac{(a^2+b)^{2k-l}}{a^{2k-2l}}
&=\frac{1}{a^{2k}}\sum_{l=0}^k \binom{k}{l} (-a^2)^l(a^2+b)^{2k-l}\\
&=\frac{1}{a^{2k}}\sum_{l=0}^k \binom{k}{l} (-a^2+b-b)^l(a^2+b)^{2k-l}\\
&=\frac{(a^2+b)^k}{a^{2k}}\sum_{l=0}^k \binom{k}{l} (-a^2+b-b)^l(a^2+b)^{k-l}\\
&=\frac{b^k\left(a^2+b\right)^k}{a^{2k}}
= \sum_{l=0}^k\binom{k}{l}\frac{b^{k+l}}{a^{2l}}.\qedhere
\end{align*}
\end{proof}

\begin{proof}[Proof of Proposition \ref{P4.1}]
Using \eqref{2.3} and \eqref{2.4}, we first compute
\begin{align*}
\left(N_m^k f\right)_{i_1 \ldots i_m}(x) &=\left(I_m^k\right)_{i_1 \dots i_m}^* I_m^k f(x) \\
& =\int_{\mathbb{S}^{n-1}}\langle x, \xi\rangle^k \xi_{i_1} \ldots \xi_{i_m} (I_m^k f)(x-\langle x, \xi\rangle \xi, \xi)\, \mathrm{d}\xi\\
&=\int_{\mathbb{S}^{n-1}}\int_{\R}t^k\langle x, \xi\rangle^k f^{j_1 \ldots j_m}(x-\langle x, \xi\rangle \xi+t\xi)\xi_{j_1}\dots \xi_{j_m}  \xi_{i_1} \ldots \xi_{i_m}\, \mathrm{d}t\mathrm{d}\xi\\
&=2\int_{\mathbb{S}^{n-1}}\int_{0}^{\infty}t^k\langle x, \xi\rangle^k f^{j_1 \ldots j_m}(x-\langle x, \xi\rangle \xi+t\xi)
(\xi^{2m})_{i_1\dots i_mj_1\dots j_m}\, \mathrm{d} t\mathrm{d}\xi.
\end{align*}
Replacing $t-\langle x, \xi\rangle$ by $t$ in the last integral, we have
\begin{align*}
\left(N_m^k f\right)_{i_1 \ldots i_m}(x)
&=2\int_{\mathbb{S}^{n-1}}\int_0^\infty\big(t+\langle x, \xi\rangle\big)^k\langle x, \xi\rangle^k
f^{j_1 \ldots j_m}(x+t\xi)(\xi^{2m})_{i_1\dots i_mj_1\dots j_m}\, \mathrm{d}t\mathrm{d}{\xi}\\
&=2\sum_{l=0}^k\binom{k}{l} \int_{\mathbb{S}^{n-1}} \int_0^\infty t^l\langle x, \xi\rangle^{2 k-l}
f^{j_1 \ldots j_m}(x+t \xi) (\xi^{2m})_{i_1\dots i_mj_1\dots j_m}\, \mathrm{d} t \mathrm{d}{\xi}.
\end{align*}
Changing integration variables by
$$
x+t\xi=z,\quad t=|z-x|,\quad\xi = \frac{z-x}{|z-x|},\quad \mathrm{d}t\mathrm{d}\xi=|z-x|^{1-n}\mathrm{d}z,
$$
we obtain
$$
\left(N_m^k f\right)_{i_1 \ldots i_m}(x)
=2 \sum_{l=0}^k\binom{k}{l} \int_{\mathbb{R}^n}\langle x, z\!-\!x\rangle^{2k-l}
\big((z\!-\!x)^{2m}\big)_{i_1 \ldots i_m j_1 \ldots j_m}
\frac{f^{j_1 \ldots j_m}(z)}{|z-x|^{2m+2k-2l+n-1}}\,\mathrm{d}z.
$$
Let us write this in the form
$$
\begin{aligned}
&\left(N_m^k f\right)_{i_1 \ldots i_m}(x)\\
&=2\int_{\mathbb{R}^n}\bigg[ \sum_{l=0}^k(-1)^l\binom{k}{l} \frac{\langle x, x-z\rangle^{2k-l}}{|x-z|^{2k-2l}}\bigg]
\big((z\!-\!x)^{2m}\big)_{i_1 \ldots i_m j_1 \ldots j_m}
\frac{f^{j_1 \ldots j_m}(z)}{|z-x|^{2m+n-1}}\,\mathrm{d}z.
\end{aligned}
$$
By Lemma \ref{L4.1} with $a=|x-z|$ and $b=\langle z,x-z\rangle$,
$$
\begin{aligned}
\sum_{l=0}^k(-1)^l\binom{k}{l} \frac{\langle x, x-z\rangle^{2k-l}}{|x-z|^{2k-2l}}
&=\sum_{l=0}^k(-1)^l\binom{k}{l} \frac{\big(|x-z|^2+\langle z, x-z\rangle\big)^{2k-l}}{|x-z|^{2k-2l}}\\
&=\sum_{l=0}^k\binom{k}{l} \frac{\langle z, x-z\rangle^{k+l}}{|x-z|^{2l}}
=\sum_{l=0}^k\binom{k}{l} \frac{\langle z, x-z\rangle^{k+l}}{|z-x|^{2l}}.
\end{aligned}
$$
Substituting this expression into the previous formula, we obtain
$$
\left(N_m^k f\right)_{i_1 \ldots i_m}(x)
=2\sum_{l=0}^k\binom{k}{l} \int_{\mathbb{R}^n}\langle z, x-z\rangle^{k+l}
\big((x\!-\!z)^{2m}\big)_{i_1 \ldots i_m j_1 \ldots j_m}
\frac{f^{j_1 \ldots j_m}(z)}{|x-z|^{2m+2l+n-1}}\,\mathrm{d}z.
$$
Then we represent the first factor of the integrand as follows
$$
\langle z, x-z\rangle^{k+l}=(z^{k+l})^{p_1\dots p_{k+l}}\big((x-z)^{k+l}\big)_{p_1\dots p_{k+l}}.
$$
Substituting this expression into the previous formula, we write the result in the form
$$
\begin{aligned}
&\left(N_m^k f\right)_{i_1 \ldots i_m}(x)\\
&=2\sum_{l=0}^k\binom{k}{l} \int_{\mathbb{R}^n}
\big(z^{k+l}\otimes f(z)\big)^{p_1\dots p_{k+l}j_1 \dots j_m}
\frac{\big((x\!-\!z)^{2m+k+l}\big)_{i_1 \ldots i_m j_1 \ldots j_mp_1\dots p_{k+l}}}{|x-z|^{2m+2l+n-1}}\,\mathrm{d}z.
\end{aligned}
$$
We can replace $\big(z^{k+l}\otimes f(z)\big)^{p_1\dots p_{k+l}j_1 \dots j_m}$ with
$\big(z^{k+l} f(z)\big)^{j_1 \dots j_mp_1\dots p_{k+l}}$ since the second factor in the integrand is symmetric in all indices. Hence
$$
\begin{aligned}
&\left(N_m^k f\right)_{i_1 \ldots i_m}(x)\\
&=2\sum_{l=0}^k\binom{k}{l} \int_{\mathbb{R}^n}
\big(z^{k+l}f(z)\big)^{j_1 \dots j_mp_1\dots p_{k+l}}
\frac{\big((x\!-\!z)^{2m+k+l}\big)_{i_1 \ldots i_m j_1 \ldots j_mp_1\dots p_{k+l}}}{|x-z|^{2m+2l+n-1}}\,\mathrm{d}z.
\end{aligned}
$$
Every integral on the right-hand side is the convolution of $\big(x^{k+l}f\big)^{j_1 \dots j_mp_1\dots p_{k+l}}$ with\hfill\\
$\frac{(x^{2m+k+l})_{i_1 \ldots i_m j_1 \ldots j_mp_1\dots p_{k+l}}}{|x|^{2m+2l+n-1}}$. We thus arrive at \eqref{4.2}.
\end{proof}

We use the abbreviated notation $\partial_{i_1\dots i_k}=\frac{\partial^k}{\partial y^{i_1}\dots\partial y^{i_k}}$ for partial derivatives.
Recall that indices can be written either in lower position or in upper position. In particular, $\PD^{i_1\dots i_k}=\PD_{i_1\dots i_k}$.
Recall that $j_y:S^m\to S^{m-1}$ is the operator of contraction with $y$, see Subsection 2.1 where the operator $j_u$ is defined. For
$0\le k\le m$, tensor fields $A^{(m,k)}\in C^\infty({\R}^n\setminus\{0\};S^{2m-k})$ are defined by \eqref{3.5} or in coordinates
$$
A^{(m,k)}_{i_1\dots i_{2m-k}}=\partial_{i_1\dots i_{2m-k}}|y|^{2m-2k-1}.
$$

Proposition \ref{P4.1} can be equivalently written in terms of Fourier transforms $\widehat f$ and $\widehat{N_m^kf}$.

\begin{proposition} \label{P4.4}
Let $0\leq k\leq m$ and $n\ge2$. For $f\in \mathcal{S}\left(\mathbb{R}^n; S^m\right)$, the equation \eqref{3.7} holds with $g=\hat f$ and $F^{(m,k)}$ defined by \eqref{3.6}.
\end{proposition}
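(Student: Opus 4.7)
The plan is to take the Fourier transform of the convolution formula \eqref{4.2} in Proposition \ref{P4.1}, contract the result with $y^{\otimes k}$ via $j_y^k$, and observe a combinatorial vanishing that collapses the sum over $l$ to its single $l=0$ term, producing precisely $A^{(m,0)}/(d^kg)$ up to a constant matching the one in \eqref{3.6}.

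First I would Fourier-transform \eqref{4.2} componentwise. The convolution theorem combined with $\widehat{x^{k+l}f}=\mathrm{i}^{k+l}d^{k+l}g$ (where $g=\widehat f$) and the Riesz-potential identity
\[
\mathcal{F}\!\left[\frac{x^{2m+k+l}}{|x|^{2m+2l+n-1}}\right](y)=\mathrm{i}^{2m+k+l}\,c_l\,d^{2m+k+l}|y|^{2m+2l-1},
\]
obtained from $\widehat{|x|^{-\alpha}}=\frac{2^{n/2-\alpha}\Gamma((n-\alpha)/2)}{\Gamma(\alpha/2)}|y|^{\alpha-n}$ at $\alpha=2m+2l+n-1$ (valid as tempered distributions), converts \eqref{4.2} into
\[
\widehat{N_m^kf}=2(2\pi)^{n/2}(-1)^{m+k}\sum_{l=0}^{k}(-1)^l\binom{k}{l}c_l\,(d^{k+l}g)\odot(d^{2m+k+l}|y|^{2m+2l-1}),
\]
where $\odot$ denotes the symmetric-tensor contraction pattern read off from \eqref{4.2}.

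Next I would apply $j_y^k$. Since $d^{2m+k+l}|y|^{2m+2l-1}$ is a totally symmetric tensor of partial derivatives of a scalar, contracting $k$ of its indices with copies of $y$ reduces to applying $(y\cdot\nabla)^k$ to the leftover derivative tensor. Iterating Euler's relation $y^s\partial_s\psi=(\deg\psi)\psi$ gives
\[
y^{s_1}\cdots y^{s_k}\,\partial_{s_1\ldots s_k i_1\ldots i_{r-k}}\phi=\prod_{j=1}^{k}(d-r+j)\,\partial_{i_1\ldots i_{r-k}}\phi
\]
for $\phi$ homogeneous of degree $d$ with $r$ total derivatives. Plugging in $d=2m+2l-1$ and $r=2m+k+l$ makes the product equal $(l-k)(l-k+1)\cdots(l-1)$: this is $(-1)^k k!$ at $l=0$ but \emph{vanishes} for every $l\in\{1,\ldots,k\}$, because the factor at position $j=k-l+1$ is exactly zero. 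Hence only the $l=0$ summand survives; its residual tensor $\partial^{2m}|y|^{2m-1}$ is precisely $A^{(m,0)}$, and the remaining contraction with $d^kg$ produces $A^{(m,0)}/(d^kg)$.

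Finally I would chase the constants. The surviving prefactor on the right-hand side is $2(2\pi)^{n/2}(-1)^m k!\,c_0$; using $\Gamma(\tfrac{1-2m}{2})=(-1)^m 2^m\sqrt{\pi}/(2m-1)!!$ to simplify $c_0$, one verifies that its reciprocal equals $\frac{2^{m-2}(2m-1)!!\,\Gamma((2m+n-1)/2)}{\pi^{(n+1)/2}k!}$, which is exactly the constant in the definition \eqref{3.6} of $F^{(m,k)}$. This gives $A^{(m,0)}/(d^kg)=F^{(m,k)}$ for all $0\le k\le m$. The main obstacle I expect is the vanishing identity in the middle step: recognizing that the $k$-fold $y$-contraction annihilates all higher moments in the sum is the heart of the argument, while what remains is a careful but routine $\Gamma$-function bookkeeping.
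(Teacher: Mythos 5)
Your proposal is correct and follows essentially the same route as the paper's proof: Fourier-transform the convolution formula \eqref{4.2}, use the distributional Fourier transform of $|x|^\lambda$, contract with $y^{i_1}\cdots y^{i_k}$, and kill every $l>0$ summand via the iterated Euler homogeneity relation (your product $\prod_{j=1}^k(l-k-1+j)$ is exactly the paper's observation that the contracted tensor is homogeneous of degree $l-1$, giving $(-1)^k k!$ at $l=0$ and $0$ for $1\le l\le k$), after which the constant bookkeeping via $\Gamma\bigl(\tfrac{1-2m}{2}\bigr)=(-1)^m2^m\sqrt{\pi}/(2m-1)!!$ matches \eqref{3.6}. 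No substantive differences.
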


\begin{proof}
Applying the Fourier transform to the equality \eqref{4.2}, we obtain
$$
\widehat{N_m^kf}_{i_1\dots i_m} =2(2\pi)^{n/2}\sum_{l=0}^k\binom{k}{l}
\big(\widehat{x^{k+l}f}\big)^{j_1\dots j_mp_1\dots p_{k+l}}
{\mathcal F}\Big[\frac{(x^{2m+k+l})_{i_1 \dots i_mj_1 \dots j_mp_1\dots p_{k+l}}}{|x|^{2m+2l+n-1}}\Big].
$$
Using the standard properties of the Fourier transform \cite[Lemma 7.1.2]{Hoermander:1983}
$$
\widehat{x_jf}=\textsl{i}\,\partial_j\widehat f,\quad
\widehat{\partial_jf}=\textsl{i}\,y_j\widehat f,
$$
we transform our formula to the form
$$
\begin{aligned}
\widehat{N_m^kf}_{i_1\dots i_m} =2(2\pi)^{n/2}(-1)^{m\!+\!k}\sum_{l=0}^k(-1)^l\binom{k}{l}&
\sigma(j_1\dots j_mp_1\dots p_{k+l})\big(\partial^{p_1\dots p_{k+l}}{\widehat f}^{j_1\dots j_m}\big)\\
&\times\partial_{i_1 \dots i_mj_1 \dots j_mp_1\dots p_{k+l}}{\mathcal F}\Big[|x|^{-2m-2l-n+1}\Big].
\end{aligned}
$$
Here we can omit the symmetrization $\sigma(j_1\dots j_mp_1\dots p_{k+l})$ since the second factor \hfill\\
$\partial_{i_1 \dots i_mj_1 \dots j_mp_1\dots p_{k+l}}{\mathcal F}\Big[|x|^{-2m-2l-n+1}\Big]$ is symmetric in all indices. Thus,
\begin{equation}
\begin{aligned}
\widehat{N_m^kf}_{i_1\dots i_m}
&=2(2\pi)^{n/2}(-1)^{m\!+\!k}\sum_{l=0}^k(-1)^l\binom{k}{l}\\
&\times\big(\partial^{p_1\dots p_{k+l}}{\widehat f}^{j_1\dots j_m}\big)
\partial_{i_1 \dots i_mj_1 \dots j_mp_1\dots p_{k+l}}{\mathcal F}\Big[|x|^{-2m-2l-n+1}\Big].
\end{aligned}
                           \label{4.4}
\end{equation}

Let ${\mathcal S}'({\R}^n)$ be the space of tempered distributions.
Recall that $\lambda\mapsto|x|^\lambda$ is the meromorphic
${\mathcal S}'({\R}^n)$-valued function of $\lambda\in\C$ with simple poles at points $-n,-n-2,-n-4,\dots$. For a detailed discussion, \cite[Chapter VII, Lemmas 6.1 and 6.2]{Helgason:2011:book}.
The Fourier transform of $|x|^\lambda$
is expressed by
$$
\begin{aligned}
{\mathcal F}[|x|^\lambda]
&=\frac{2^{\lambda+n/2}\Gamma\Big(\frac{\lambda+n}{2}\Big)}{\Gamma(-\lambda/2)}|y|^{-\lambda-n}\quad(\lambda,-\lambda-n\notin2{\Z}^+),\\
{\mathcal F}[|x|^{2k}]&=(2\pi)^{n/2}(-\Delta)^k\delta\quad(k\in{\Z}^+),
\end{aligned}
$$
where $\delta$ is the Dirac function. As compared with formulas (42) and (43) on p. 238 of \cite{Helgason:2011:book}, there is the additional factor $(2\pi)^{-n/2}$ on right-hand side of these formulas. The factor appears since our definition of the Fourier transform is slightly different from the one in Helgason's book \cite{Helgason:2011:book}.
 
In particular,
$$
{\mathcal F}\Big[|x|^{-2m-2l-n+1}\Big]=\frac{\Gamma\Big(\frac{1-2m-2l}{2}\Big)}
{2^{2m+2l+n/2-1}\Gamma\Big(\frac{2m+2l+n-1}{2}\Big)}\,|y|^{2m+2l-1}.
$$
Substituting this value into \eqref{4.4}, we obtain
\begin{align}
 \begin{split}
     \widehat{N_m^kf}_{i_1\dots i_m}(y) =&\frac{(-1)^{m\!+\!k}(2\pi)^{n/2}}{2^{2m\!+\!n/2\!-\!2}}\sum_{l=0}^k\binom{k}{l}
\frac{(-1)^l\Gamma\Big(\frac{1\!-\!2m\!-\!2l}{2}\Big)}{2^{2l}\Gamma\Big(\frac{2m\!+\!2l\!+\!n\!-\!1}{2}\Big)}
\\
&\qquad\times\Big(\partial_{i_1 \dots i_mj_1 \dots j_mp_1\dots p_{k+l}}|y|^{2m\!+\!2l\!-\!1}\Big)\partial^{p_1\dots p_{k+l}}{\widehat f}^{j_1\dots j_m}(y).\label{4.5}
 \end{split}
\end{align}

Let us contract the equation \eqref{4.5} with $y^{i_1}\dots y^{i_k}$, i.e., multiply the equation by $y^{i_1}\dots y^{i_k}$ and perform the summation over indices $i_1\dots i_k$
\begin{equation}
\begin{aligned}
\big(j_y^k\,&\widehat{N^{k}_mf}\big)_{i_{k+1}\dots  i_m}(y) =\frac{(-1)^{m\!+\!k}(2\pi)^{n/2}}{2^{2m\!+\!n/2\!-\!2}}\sum_{l=0}^k\binom{k}{l}
\frac{(-1)^l\Gamma\Big(\frac{1\!-\!2m\!-\!2l}{2}\Big)}{2^{2l}\Gamma\Big(\frac{2m\!+\!2l\!+\!n\!-\!1}{2}\Big)}\\
&\times\bigg[ y^{i_1}\dots y^{i_k}\partial_{i_1\cdots i_k}
\Big(\partial_{i_{k+1}\dots i_mj_1 \dots j_mp_1\dots p_{k+l}}|y|^{2m\!+\!2l\!-\!1}\Big)\bigg]
\partial^{p_1\dots p_{k+l}}{\widehat f}^{j_1\dots j_m}(y).
\end{aligned}
                           \label{4.6}
\end{equation}
On the right-hand side, all summands corresponding to $l>0$ are equal to zero. Indeed,\\
$\partial_{i_{k+1}\dots i_mj_1 \dots j_mp_1\dots p_{k+l}}|y|^{2m\!+\!2l\!-\!1}$ is the positively homogeneous function of degree $l-1$. By the Euler equation for homogeneous functions,
$$
\begin{aligned}
y^{i_1}\dots y^{i_k}\partial_{i_1\cdots i_k}
\Big(&\partial_{i_{k+1}\dots i_mj_1 \dots j_mp_1\dots p_{k+l}}|y|^{2m\!+\!2l\!-\!1}\Big)\\
&=\left\{\begin{array}{ll}(-1)^kk! \,\partial_{i_{k+1}\dots i_mj_1 \dots j_mp_1\dots p_{k+l}}|y|^{2m\!+\!2l\!-\!1}&\mbox{if}\ l=0,
\\0&\mbox{if}\ l>0.\end{array}\right.
\end{aligned}
$$
The formula \eqref{4.6} becomes
\begin{align}
    &\big(j_y^k\,\widehat{N^{k}_mf}\big)_{i_{k+1}\dots  i_m}\notag\\
&\qquad\qquad=\frac{(-1)^m(2\pi)^{n/2} k!\Gamma\big(\frac{1\!-\!2m}{2}\big)}{2^{2m\!+\!n/2\!-\!2}\Gamma\big(\frac{2m\!+\!n\!-\!1}{2}\big)}
\Big(\partial_{i_{k+1}\dots i_mj_1 \dots j_mp_1\dots p_k}|y|^{2m\!-\!1}\Big)
\partial^{p_1\dots p_k}{\widehat f}^{j_1\dots j_m}\notag\\
&\qquad\qquad=\frac{\pi^{(n+1)/2}\, k!}{2^{m-2}\,\Gamma\big(\frac{2m+n-1}{2}\big)}
\big(A^{m,0}/(d^k\widehat f)\big)_{i_{k+1}\dots  i_m}.\label{4.7}
\end{align}
This is equivalent to \eqref{3.7}.
\end{proof}

\begin{lemma} \label{L4.2}
Let $0\leq k\leq m$ and $n\ge2$. Then $j_y^{k+1}\,\widehat{N^{k}_mf}(y)=0$
for any tensor field $f\in \mathcal{S}\left(\mathbb{R}^n; S^m\right)$ and for any $y\in{\R}^n$.
\end{lemma}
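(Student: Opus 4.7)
The plan is to derive this identity directly from formula \eqref{4.5}, which already expresses $\widehat{N_m^k f}_{i_1\dots i_m}(y)$ as a sum over $l = 0,\dots,k$ of products of partial derivatives of $|y|^{2m+2l-1}$ against partial derivatives of $\widehat f$. The proof in the excerpt of \eqref{4.7} showed that contracting with $y^{i_1}\cdots y^{i_k}$ kills every term with $l \ge 1$ by Euler's identity and keeps the $l=0$ term. I will show that one additional contraction, for a total of $k+1$ contractions with $y$, kills \emph{every} term in the sum, including the $l=0$ one.

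First, I would apply $y^{i_1}\cdots y^{i_{k+1}}$ to both sides of \eqref{4.5}. For a fixed $l \in \{0, 1,\dots, k\}$, the factor involving $|y|$ is
\[
\partial_{i_1\dots i_m j_1\dots j_m p_1\dots p_{k+l}}\, |y|^{2m+2l-1},
\]
which contains $2m+k+l$ partial derivatives. Using that $\partial$ decreases the degree of a homogeneous function by one, the inner tensor $\partial_{i_{k+2}\dots i_m j_1\dots j_m p_1\dots p_{k+l}}\, |y|^{2m+2l-1}$ (with $k+1$ fewer derivatives applied) is positively homogeneous in $y$ of degree $(2m+2l-1) - (2m-1+l) = l$.

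Next, I would invoke the iterated Euler relation for homogeneous functions: if $h$ is positively homogeneous of degree $d$ on ${\R}^n\setminus\{0\}$, then
\[
y^{i_1}\cdots y^{i_s}\,\partial_{i_1\cdots i_s} h = d(d-1)\cdots(d-s+1)\, h.
\]
Applied to our setting with $s = k+1$ and $d = l$, the numerical coefficient is $l(l-1)\cdots(l-k)$. For every $l$ in the summation range $\{0,1,\dots,k\}$, this product contains the factor $(l-l)=0$ and therefore vanishes identically. Hence each summand in \eqref{4.5} contributes zero after contraction with $y^{i_1}\cdots y^{i_{k+1}}$, and we conclude $(j_y^{k+1}\,\widehat{N_m^k f})_{i_{k+2}\dots i_m}(y) = 0$ for all $y\in{\R}^n\setminus\{0\}$; by smoothness of $j_y^{k+1}\,\widehat{N_m^k f}$ (established earlier), the identity extends to all of ${\R}^n$.

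There is no real obstacle here beyond carefully bookkeeping the number of derivatives and the resulting degree of homogeneity; the lemma is essentially a structural corollary of the explicit formula \eqref{4.5} combined with Euler's identity. The only point that requires mild care is to confirm that the relevant partial derivatives of $|y|^{2m+2l-1}$ are indeed smooth positively homogeneous functions on ${\R}^n\setminus\{0\}$ (so that Euler's formula applies pointwise), which is immediate since $|y|^{2m+2l-1}$ is smooth away from the origin.
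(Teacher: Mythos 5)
Your proof is correct and follows essentially the same route as the paper: both arguments reduce the claim to Euler's identity for the homogeneous derivatives of $|y|^{2m+2l-1}$ appearing in \eqref{4.5}, the only difference being that the paper performs the $k+1$ contractions in two stages (first deriving \eqref{4.7}, where only the $l=0$ term survives, then applying one more $j_y$ to a degree-zero homogeneous factor), whereas you apply the iterated Euler identity $y^{i_1}\cdots y^{i_{k+1}}\partial_{i_1\dots i_{k+1}}h=l(l-1)\cdots(l-k)\,h$ all at once and observe that the falling factorial vanishes for every $0\le l\le k$. The only point to read with care is the degenerate case $k=m$, where contracting $m+1$ indices of a rank-$m$ tensor must be interpreted as the trivial statement, exactly as the paper does.
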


\begin{proof}
The statement trivially holds in the case of $k=m$. In the case of $k<m$ we apply the operator $j_y$ to the first line of \eqref{4.7}
\begin{align*}
    &\big(j_y^{k+1}\,\widehat{N^{k}_mf}\big)_{i_{k+2}\dots  i_m}(y)
\\&\qquad\qquad=C_m^k\Big[\big(y^{i_{k+1}}\partial_{i_{k+1}}\big)
\Big(\partial_{i_{k+2}\dots i_mj_1 \dots j_mp_1\dots p_k}|y|^{2m\!-\!1}\Big)\Big]
\partial^{p_1\dots p_k}{\widehat f}^{j_1\dots j_m}(y),
\end{align*}
where $C_m^k=(-1)^m (2\pi)^{n/2}k!\Gamma\big(\frac{1\!-\!2m}{2}\big)/\Big(2^{2m\!+\!n/2\!-\!2}\Gamma\big(\frac{2m\!+\!n\!-\!1}{2}\big)\Big)$.
The expression in brackets is equal to zero since $\partial_{i_{k+2}\dots i_mj_1 \dots j_mp_1\dots p_k}|y|^{2m\!-\!1}$ is a positively homogeneous function of zero degree.
\end{proof}

From now on we can forget the momentum ray transform. The rest of the article is devoted to investigation of the system \eqref{3.7}.

Lemma \ref{L4.2} implies that right-hand sides of equations \eqref{3.7} satisfy
\begin{equation}
j_y\,F^{(m,k)}(y)=0\quad(0\le k\le m).
                           \label{4.8}
\end{equation}
Thus, equalities \eqref{4.8} constitute necessary conditions for existence of a solution $g\in {\mathcal S}({\R}^n;S^m)$ to the system \eqref{3.7}.
Equalities \eqref{4.8} appear to be necessary and sufficient consistency conditions for the system \eqref{3.7}, although this fact has not been formally proven.
\section{\texorpdfstring{Reduction of the system \eqref{3.7} to an algebraic system}{Reduction of the system (3.7) to an algebraic system}}

Tensor fields $A^{(m,k)}\in C^\infty({\R}^n\setminus\{0\};S^{2m-k})\ (0\le k\le m)$ are defined by \eqref{3.5}.
There exist two important relations between these tensor fields.

\begin{lemma} \label{L5.1}
The following equalities are valid:
\begin{equation}
j_yA^{(m,k)}=-kA^{(m-1,k-1)}\quad(0\le k\le m),
                           \label{5.1}
\end{equation}
\begin{equation}
\mbox{\rm div}\,A^{(m,k)}=(2m-2k-1)(n+2m-2k-3)A^{(m,k+1)}\quad(0\le k\le m-1).
                           \label{5.2}
\end{equation}
\end{lemma}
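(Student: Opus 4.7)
The proof plan is to work out both identities directly in coordinates, using the explicit definition $A^{(m,k)}_{i_1\dots i_{2m-k}}(y)=\partial_{i_1\dots i_{2m-k}}|y|^{2m-2k-1}$ together with two elementary facts: Euler's homogeneity relation for the first identity, and the Laplacian formula $\Delta|y|^\alpha=\alpha(\alpha+n-2)|y|^{\alpha-2}$ (valid on ${\R}^n\setminus\{0\}$) for the second.

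For \eqref{5.1}, I would unfold $j_y$: since $j_y$ contracts the last slot with $y$,
$$
(j_yA^{(m,k)})_{i_1\dots i_{2m-k-1}}(y) = y^{j}\,\partial_j\bigl[\partial_{i_1\dots i_{2m-k-1}}|y|^{2m-2k-1}\bigr].
$$
The bracketed expression is obtained from $|y|^{2m-2k-1}$ by applying $2m-k-1$ partial derivatives, so it is positively homogeneous of degree $(2m-2k-1)-(2m-k-1)=-k$. Euler's identity then gives $y^j\partial_j[\,\cdot\,]=-k[\,\cdot\,]$, and the bracketed tensor is by definition $A^{(m-1,k-1)}_{i_1\dots i_{2m-k-1}}$ (note that $d^{2(m-1)-(k-1)}=d^{2m-k-1}$ and the exponent $2(m-1)-2(k-1)-1=2m-2k-1$ matches). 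The boundary case $k=0$ is consistent: the degree is $0$, Euler gives $0$, and formally the right-hand side carries the factor $-k=0$.

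For \eqref{5.2}, I would observe that the divergence of a symmetric tensor field given by iterated partials is a Laplacian applied under the derivatives:
$$
(\mathrm{div}\,A^{(m,k)})_{i_1\dots i_{2m-k-1}}(y) = \delta^{jl}\partial_l\partial_j\partial_{i_1\dots i_{2m-k-1}}|y|^{2m-2k-1} = \partial_{i_1\dots i_{2m-k-1}}\bigl(\Delta|y|^{2m-2k-1}\bigr).
$$
Applying $\Delta|y|^\alpha=\alpha(\alpha+n-2)|y|^{\alpha-2}$ with $\alpha=2m-2k-1$ produces the scalar factor $(2m-2k-1)(n+2m-2k-3)$ times $|y|^{2m-2k-3}$. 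Pulling the $2m-k-1$ remaining derivatives back inside yields exactly $A^{(m,k+1)}_{i_1\dots i_{2m-k-1}}=\partial_{i_1\dots i_{2m-k-1}}|y|^{2(m)-2(k+1)-1}$, giving the claimed identity. I would also note that the range $0\le k\le m-1$ is precisely what is needed to keep the exponent $2m-2k-1$ strictly positive enough for the manipulation to proceed away from the origin without any distributional subtleties.

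There is no real obstacle here: both identities reduce to a one-line symbolic computation once the correct interpretation of $A^{(m,k)}$ is in place. The only thing to be careful about is bookkeeping of the number of derivatives and the degree of homogeneity so that the exponents match $A^{(m-1,k-1)}$ and $A^{(m,k+1)}$ on the right-hand sides, and that everything is performed on ${\R}^n\setminus\{0\}$ where $|y|^{2m-2k-1}$ is smooth.
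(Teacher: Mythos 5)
Your proposal is correct and follows essentially the same route as the paper: for \eqref{5.1} the paper likewise writes $j_yA^{(m,k)}=\langle y,\partial\rangle\,d^{2m-k-1}|y|^{2m-2k-1}$ and invokes Euler's identity for the homogeneity degree $-k$, and for \eqref{5.2} it likewise commutes the contraction $\delta^{jl}\partial_j\partial_l$ past the remaining derivatives to get $d^{2m-k-1}(\Delta|y|^{2m-2k-1})$ and applies $\Delta|y|^\alpha=\alpha(\alpha+n-2)|y|^{\alpha-2}$. The only cosmetic difference is that you work fully in coordinates while the paper keeps part of the computation coordinate-free; your exponent bookkeeping matches the paper's exactly.
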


\begin{proof}
Applying the operator $j_y$ to the equality \eqref{3.5}, we have
$$
j_yA^{(m,k)}=j_yd^{2m-k}|y|^{2m-2k-1}.
$$
Using the operator $\langle y,\partial\rangle=y^j\frac{\partial}{\partial y^j}$, the latter formula can be written as
\begin{equation}
j_yA^{(m,k)}=\langle y,\partial\rangle\,d^{2m-k-1}|y|^{2m-2k-1}.
                           \label{5.3}
\end{equation}
The tensor field $d^{2m-k-1}|y|^{2m-2k-1}$ is positively homogeneous of degree $-k$. By the Euler equation for homogeneous functions,
$$
\langle y,\partial\rangle\,d^{2m-k-1}|y|^{2m-2k-1}=-k\,d^{2m-k-1}|y|^{2m-2k-1}.
$$
Substituting this expression into \eqref{5.3}, we obtain
$$
j_yA^{(m,k)}=-k\,d^{2m-k-1}|y|^{2m-2k-1}.
$$
By \eqref{3.5}, the right-hand side of this formula is equal to $-kA^{(m-1,k-1)}$. This proves \eqref{5.1}.

Let us write \eqref{3.5} in the coordinate form
$$
A^{(m,k)}_{i_{k+1}\dots i_{2m}}=\partial_{i_{k+1}\dots i_{2m}}|y|^{2m-2k-1}.
$$
Differentiate this equality
$$
\frac{\partial A^{(m,k)}_{i_{k+1}\dots i_{2m}}}{\partial y^j}
=\partial_{ji_{k+1}\dots i_{2m}}|y|^{2m-2k-1}.
$$
From this
$$
\begin{aligned}
(\mbox{\rm div}\,A^{(m,k)})_{i_{k+2}\dots i_{2m}}
&=\delta^{jl}\,\frac{\partial A^{(m,k)}_{li_{k+2}\dots i_{2m}}}{\partial y^j}
=\delta^{jl}\,\partial_{jli_{k+2}\dots i_{2m}}|y|^{2m-2k-1}\\
&=\partial_{i_{k+2}\dots i_{2m}}(\Delta|y|^{2m-2k-1}).
\end{aligned}
$$
This can be written in the coordinate-free form
\begin{equation}
\mbox{\rm div}\,A^{(m,k)}=d^{2m-k-1}(\Delta|y|^{2m-2k-1}).
                           \label{5.4}
\end{equation}
Using the well-known formula
$$
\Delta|y|^\alpha=\alpha(\alpha+n-2)|y|^{\alpha-2},
$$
we obtain
$$
\Delta|y|^{2m-2k-1}=(2m-2k-1)(n+2m-2k-3)|y|^{2m-2k-3}.
$$
Substituting this expression into \eqref{5.4}, we have
$$
\mbox{\rm div}\,A^{(m,k)}=(2m-2k-1)(n+2m-2k-3)\,d^{2m-k-1}|y|^{2m-2k-3}.
$$
By \eqref{3.5},
$$
d^{2m-k-1}|y|^{2m-2k-3}=A^{(m,k+1)}.
$$
The two last formulas imply \eqref{5.2}.
\end{proof}

From \eqref{5.2}, one can easily prove by induction on $k$
\begin{equation}
\mbox{\rm div}^k\,A^{(m,0)}=\frac{(2m-1)!!(n+2m-3)!!}{(2m-2k-1)!!(n+2m-2k-3)!!}\,A^{(m,k)}\quad(0\le k\le m).
                           \label{5.5}
\end{equation}

We reproduce the system \eqref{3.7}
\begin{equation}
A^{(m,0)}/(d^lg)=F^{(m,l)}\quad(l=0,1,\dots,m).
                           \label{5.6}
\end{equation}
Here $F^{(m,l)}\in C^\infty({\R}^n\setminus\{0\};S^{m-l})\ (0\le l\le m)$ are arbitrary tensor fields belonging to the kernel of $j_y$.

\begin{proposition} \label{P5.1}
If a tensor field $g\in C^\infty({\R}^n\setminus\{0\};S^m)$ satisfies \eqref{5.6}, then
\begin{align}
    \begin{split}
       & (\mbox{\rm div}^kA^{(m,0)})/(d^lg)\\&\qquad\qquad=(-1)^k\sum\limits_{p=0}^k(-1)^p\binom{k}{p}\,\mbox{\rm div}^p\,F^{(m,k+l-p)}
\quad(0\le k\le m, 0\le l\le m-k).
                           \label{5.7}
    \end{split}
\end{align}
\end{proposition}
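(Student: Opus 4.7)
The plan is to argue by induction on $k$. The case $k=0$ is immediate: the right-hand side of \eqref{5.7} collapses to $F^{(m,l)}$, and the left-hand side equals $A^{(m,0)}/(d^l g) = F^{(m,l)}$ by the hypothesis \eqref{5.6}.

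For the inductive step, the engine is a single Leibniz-type commutation identity: for any smooth symmetric tensor field $B$ on ${\R}^n\setminus\{0\}$ of sufficiently large rank,
$$\mbox{\rm div}\bigl(B/(d^l g)\bigr) = (\mbox{\rm div}\,B)/(d^l g) + B/(d^{l+1} g).$$
To prove this I would expand both sides in coordinates. Applying the product rule to $\partial^q\bigl(B_{\ldots q j_1\ldots j_{m+l}}(d^l g)^{j_1\ldots j_{m+l}}\bigr)$ yields two terms. The first, in which the derivative hits $B$, becomes $(\mbox{\rm div}\,B)/(d^l g)$ after using the full symmetry of $B$ to move the contracted index to the last slot. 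The second contracts $B$ against $\partial^q(d^l g)^{j_1\ldots j_{m+l}}$; because $B$ is fully symmetric in the $m+l+1$ indices being summed, the unsymmetrized partial may be replaced by its full symmetrization, which is precisely $d^{l+1}g$. This yields the second term.

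Now apply the identity with $B=\mbox{\rm div}^k A^{(m,0)}$, which is symmetric by \eqref{5.5}, and write $T_k^{(l)}:=(\mbox{\rm div}^k A^{(m,0)})/(d^l g)$. Rearranging the identity gives the recurrence
$$T_{k+1}^{(l)} = \mbox{\rm div}\,T_k^{(l)} - T_k^{(l+1)}.$$
Substituting the inductive formula into both terms on the right, reindexing the first sum by $q=p+1$, and combining via Pascal's rule $\binom{k}{p-1}+\binom{k}{p}=\binom{k+1}{p}$, one recovers exactly \eqref{5.7} with $k$ replaced by $k+1$.

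I expect no serious obstacle. The only delicate point is the commutator identity above: the replacement of $\partial^q(d^l g)^{j_1\ldots j_{m+l}}$ by the symmetric $d^{l+1}g$ is legitimate only because $B$ is fully symmetric and all the new indices are contracted away. One should also verify at each step that the rank constraint $l+k\le m$ (needed for both sides of \eqref{5.7} to be well defined) is respected by the recurrence, which it is since $T_k^{(l+1)}$ is invoked only for $l\le m-k-1$.
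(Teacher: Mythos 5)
Your proposal is correct and follows essentially the same route as the paper: induction on $k$, the Leibniz identity $\mbox{\rm div}(u/v)=(\mbox{\rm div}\,u)/v+u/(dv)$ (which the paper states with the same justification via symmetry of the contracted tensor), the resulting recurrence $T_{k+1}^{(l)}=\mbox{\rm div}\,T_k^{(l)}-T_k^{(l+1)}$, and reindexing plus Pascal's rule. No gaps; your remark on the rank constraint $k+l\le m$ is a sensible extra check that the paper leaves implicit.
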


\begin{proof}
We prove \eqref{5.7} by induction on $k$. For $k=0$, \eqref{5.7} coincides with \eqref{5.6}. Assume \eqref{5.7} to be valid for some $k$. Apply the operator $\mbox{\rm div}$ to the equation \eqref{5.7}
\begin{equation}
\mbox{\rm div}\Big((\mbox{\rm div}^kA^{(m,0)})/(d^lg)\Big)
=(-1)^k\sum\limits_{p}(-1)^p\binom{k}{p}\,\mbox{\rm div}^{p+1}\,F^{(m,k+l-p)}.
                           \label{5.8}
\end{equation}
We assume binomial coefficients to be defined for all integers $k$ and $p$ with the convention:
\begin{equation}
\binom{k}{p}=0\quad \mbox{if either}\  k<0\ \mbox{or}\ p<0\ \mbox{or}\ k<p.
                           \label{5.8a}
\end{equation}
 With this convention, we can assume the summation to be performed over all integers $p$ in \eqref{5.8} and the formulas below.

The equality
$$
\mbox{\rm div}(u/v)=(\mbox{\rm div}\,u)/v+u/(dv)
$$
is valid for any two tensor fields $u$ and $v$. This can be readily proved based on the definitions of the operators $d$ and $\rm div$. Using this equality, we write \eqref{5.8} in the form
$$
(\mbox{\rm div}^{k+1}A^{(m,0)})/(d^lg)+(\mbox{\rm div}^kA^{(m,0)})/(d^{l+1}g)
=(-1)^k\sum\limits_{p}(-1)^p\binom{k}{p}\,\mbox{\rm div}^{p+1}\,F^{(m,k+l-p)}.
$$
By the induction hypothesis,
$$
(\mbox{\rm div}^kA^{(m,0)})/(d^{l+1}g)
=(-1)^k\sum\limits_{p}(-1)^p\binom{k}{p}\,\mbox{\rm div}^p\,F^{(m,k+l-p+1)}.
$$
Substituting this expression into the previous formula, we write the result in the form
$$
\begin{aligned}
(\mbox{\rm div}^{k+1}A^{(m,0)})/(d^lg)
&=(-1)^k\sum\limits_{p}(-1)^p\binom{k}{p}\,\mbox{\rm div}^{p+1}\,F^{(m,k+l-p)}\\
&+(-1)^{k+1}\sum\limits_{p}(-1)^p\binom{k}{p}\,\mbox{\rm div}^p\,F^{(m,k+l-p+1)}.
\end{aligned}
$$
Changing the summation variable of the first sum as $p:=p-1$, we obtain
$$
(\text{div}^{k+1}A^{(m,0)})/(d^lg)
=(-1)^{k+1}\sum\limits_{p}(-1)^p\bigg[\binom{k}{p-1}+\binom{k}{p}\bigg]\text{div}^p\,F^{(m,k+l-p+1)}.
$$
By the Pascal triangle equality, $\binom{k}{p-1}+\binom{k}{p}=\binom{k+1}{p}$. Substituting this expression into the last formula, we arrive at \eqref{5.7} for $k:=k+1$.
\end{proof}

Setting $l=0$ in \eqref{5.7}, we obtain
$$
(\mbox{\rm div}^kA^{(m,0)})/g=(-1)^k\sum\limits_{p=0}^k(-1)^p\binom{k}{p}\,\mbox{\rm div}^p\,F^{(m,k-p)}
\quad(0\le k\le m).
$$
Substituting the value \eqref{5.5} of $\mbox{\rm div}^kA^{(m,0)}$, we arrive at the equation
$$
A^{(m,k)}/g=(-1)^k\frac{(2m-2k-1)!!(n+2m-2k-3)!!}{(2m-1)!!(n+2m-3)!!}
\sum\limits_{p=0}^k(-1)^p\binom{k}{p}\,\mbox{\rm div}^p\,F^{(m,k-p)}.
$$
We have thus proved

\begin{proposition} \label{P5.2}
If a tensor field $g\in C^\infty({\R}^n\setminus\{0\};S^m)$ solves the system \eqref{3.7}, then it also solves the system \eqref{3.8} with right-hand sides defined by \eqref{3.9}.
\end{proposition}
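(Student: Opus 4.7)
The proof is essentially a direct consequence of Proposition \ref{P5.1} and the previously established identity \eqref{5.5}. My plan is to specialize Proposition \ref{P5.1} to the case $l=0$, substitute the algebraic identity expressing $\mbox{\rm div}^k A^{(m,0)}$ in terms of $A^{(m,k)}$, and reindex the resulting sum to match the form of \eqref{3.9}.

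Concretely, first I set $l=0$ in formula \eqref{5.7} of Proposition \ref{P5.1}, which gives
\[
(\mbox{\rm div}^k A^{(m,0)})/g = (-1)^k\sum_{p=0}^k (-1)^p \binom{k}{p}\,\mbox{\rm div}^p\,F^{(m,k-p)}
\]
for $0 \le k \le m$. Next I apply \eqref{5.5} to the left-hand side, which converts it to a scalar multiple of $A^{(m,k)}/g$. Dividing both sides by that scalar factor yields
\[
A^{(m,k)}/g = (-1)^k\,\frac{(2m-2k-1)!!\,(n+2m-2k-3)!!}{(2m-1)!!\,(n+2m-3)!!}\sum_{p=0}^{k}(-1)^p\binom{k}{p}\,\mbox{\rm div}^p F^{(m,k-p)}.
\]

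To match \eqref{3.9} exactly, I then reindex the sum via the substitution $p \mapsto k-p$, using $\binom{k}{k-p}=\binom{k}{p}$ and the fact that $(-1)^k(-1)^{k-p}=(-1)^p$ (since $(-1)^{2k}=1$). This turns the prefactor $(-1)^k$ together with the sign $(-1)^p$ inside into $(-1)^p$ acting on the reindexed summand $\mbox{\rm div}^{k-p} F^{(m,p)}$, giving precisely the right-hand side $H^{(m,k)}$ of \eqref{3.9}. This establishes \eqref{3.8}.

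There is no serious obstacle here: the structural work has already been done in Proposition \ref{P5.1} (the induction that produced the alternating-sign binomial sum) and in Lemma \ref{L5.1} together with its iterated form \eqref{5.5}. The only step requiring care is the reindexing at the end, which is purely combinatorial bookkeeping to reconcile the two equivalent presentations of the sum. Since Proposition \ref{P5.1} applies to any tensor field $g$ satisfying \eqref{5.6}, the derivation requires no additional hypotheses on $g$ beyond those in the statement.
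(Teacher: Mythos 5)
Your proposal is correct and follows exactly the paper's own route: set $l=0$ in \eqref{5.7}, substitute \eqref{5.5}, and identify the result with $H^{(m,k)}$. The only difference is that you carry out the reindexing $p\mapsto k-p$ explicitly, whereas the paper leaves that last bookkeeping step implicit; your version of it is correct.
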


We emphasize that \eqref{3.8} is a system of linear algebraic equations in coordinates of the unknown tensor field $g$. Of course the system \eqref{3.8} is not equivalent to \eqref{3.7}. Proposition \ref{P5.2} states that \eqref{3.7} implies \eqref{3.8} but not vice versa. Nevertheless, we will see that $g$ can be uniquely recovered from \eqref{3.8}. In this sense the system \eqref{3.7} is reduced to the algebraic system \eqref{3.8}.

\section{\texorpdfstring{Solution of the system \eqref{3.8}}{Solution of the system (3.8)}}

The following statement actually contains the most important part of the proof of Theorem \ref{Th3.1}.

\begin{proposition} \label{P6.1}
If the system \eqref{3.8} is solvable, then the solution is unique and is expressed by the formula
\begin{equation}
g(y)=\frac{|y|}{m!}\sum\limits_{k=0}^m\frac{(-1)^k}{(2m\!-\!2k\!-\!1)!!} \binom{m}{k}
\sum\limits_{p=0}^{\min(k,m-k)}\!\!\frac{(-1)^p}{2^p}\binom{m - k}{p}\, i^{p}i_{y}^{k-p} j_{y}^{p}H^{(m,k)}(y).
                           \label{6.0}
\end{equation}
\end{proposition}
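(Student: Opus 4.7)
The system \eqref{3.8} is a purely algebraic linear system in $g(y)\in S^m$ at each fixed $y\ne 0$, so the proposition is a pointwise claim; fix such a $y$ throughout. The starting technical ingredient is the explicit decomposition
\[
A^{(m,k)}(y) \;=\; \sum_{p=0}^{\lfloor(2m-k)/2\rfloor}c^{m,k}_p\,|y|^{-2m-1+2p}\,y^{\,2m-k-2p}\,\delta^{\,p},
\]
which follows by iterating the standard expansion of $(\xi\cdot\partial_y)^{N}|y|^{\alpha}$ as a sum of monomials in $\langle y,\xi\rangle$ and $|\xi|^2$ times a power of $|y|$; the constants $c^{m,k}_p$ come out as products of binomials and double factorials. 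With this in hand, each contraction $A^{(m,k)}/g$ becomes an explicit linear combination of tensors obtained from $g$ by alternating applications of $i$, $j_y$, and symmetric multiplication by $y$.

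For uniqueness, I would show that $A^{(m,k)}/g=0$ for all $k=0,\dots,m$ forces $g=0$. Decompose $g\in S^m$ at the fixed point $y$ by the $O(n-1)$-action stabilizing $y/|y|$: every $g$ can be written uniquely as $g=\sum_{r,s} i_y^{\,r}\,i^{\,s}\,h_{r,s}$ with $h_{r,s}\in S^{m-r-2s}$ trace-free with respect to both $y$ and $\delta$. Substituting this decomposition into the explicit form of $A^{(m,k)}/g$ obtained above, the $m+1$ equations form a triangular system in the $h_{r,s}$ which forces $h_{r,s}=0$ for all $r,s$. Lemma~\ref{L5.1} is the key tool: the identity $j_yA^{(m,k)}=-kA^{(m-1,k-1)}$ means that $j_y$-contracting the equation at index $k$ yields an equation of the same algebraic form but at rank one lower, which both organizes the triangularity and enables an induction on $m$ as an alternative route.

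To derive the formula \eqref{6.0}, I would substitute the ansatz
\[
g \;=\; |y|\sum_{k=0}^{m}\sum_{p=0}^{\min(k,m-k)} \lambda^{m,n}_{k,p}\; i^{\,p}\,i_y^{\,k-p}\,j_y^{\,p}\, H^{(m,k)},
\]
with coefficients $\lambda^{m,n}_{k,p}$ to be determined, into each equation $A^{(m,l)}/g = H^{(m,l)}$; then expand the left-hand side using the decomposition of $A^{(m,l)}$ together with the commutation identities among $i,j,i_y,j_y$ (such as $(r+1)j_y i_y - r\,i_y j_y = |y|^2\cdot\mathrm{id}$ on $S^r$ and the analogous relation for $ji-ij$); and match the coefficients of the independent elementary tensors built from $y,\delta$ and the $H^{(m,k)}$. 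The outcome is a triangular linear system for $\lambda^{m,n}_{k,p}$ whose solution is precisely that in \eqref{6.0}. The main obstacle is combinatorial: the matching step produces sums of products of binomials and double factorials, and verifying the resulting hypergeometric-type identities (variants of Chu--Vandermonde adapted to double factorials) by hand is quite involved. Carrying out the computation in the polynomial representation $g \leftrightarrow g(\xi)=g_{i_1\dots i_m}\xi^{i_1}\cdots\xi^{i_m}$ --- under which $i$ is multiplication by $|\xi|^2$, $i_y$ by $\langle y,\xi\rangle$, and $j,j_y$ are suitably rescaled $\Delta_\xi$ and $\langle y,\partial_\xi\rangle$ --- converts everything into a polynomial identity in $(\xi,y)$ and makes the combinatorics substantially more tractable.
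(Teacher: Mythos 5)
Your overall strategy has a genuine logical gap in the step that is supposed to produce the formula. You propose to substitute the ansatz $g=|y|\sum_{k,p}\lambda^{m,n}_{k,p}\,i^p i_y^{k-p}j_y^p H^{(m,k)}$ into each equation $A^{(m,l)}/g=H^{(m,l)}$ and determine the $\lambda^{m,n}_{k,p}$ by ``matching the coefficients of the independent elementary tensors built from $y,\delta$ and the $H^{(m,k)}$.'' But the system \eqref{3.8} is heavily overdetermined ($A^{(m,k)}/g\in S^{m-k}$ for each $k$, versus a single unknown $g\in S^m$), so it is solvable only for right-hand sides lying in the range of $g\mapsto(A^{(m,k)}/g)_{k}$; for such $H^{(m,k)}$ the tensors $i^a i_y^b j_y^c H^{(m,k)}$ satisfy many linear relations and are emphatically not independent. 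Treating them as independent would force conditions on the $\lambda$'s that have no solution (no linear map $L$ can satisfy $A^{(m,l)}/L(H)=H^{(m,l)}$ for \emph{arbitrary} $H$). The correct formulation — and the one the paper uses — is the reverse: one must establish an identity valid for \emph{every} $g\in C^\infty(\R^n\setminus\{0\};S^m)$ expressing $|y|^{-1}g$ as a universal linear combination of the tensors $i_y^{k-p}i^pj_y^p(A^{(m,k)}/g)$; this is Lemma \ref{L6.2} (formula \eqref{6.8}), from which both uniqueness and \eqref{6.0} follow at once upon replacing $A^{(m,k)}/g$ by $H^{(m,k)}$. Your proposal never establishes such an identity, and without it the coefficient-matching step does not go through as described.

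The second, admitted, gap is that the combinatorial core is left undone: you acknowledge that the ``hypergeometric-type identities'' needed to pin down the coefficients are ``quite involved'' and do not carry them out, yet these coefficients are the entire content of \eqref{6.0}. The paper avoids the direct combinatorial explosion by inducting on the rank $m$: Lemma \ref{L6.1} relates contractions of a slice $\tilde g_{i_1\dots i_m}=g_{i_1\dots i_m i_{m+1}}$ with $A^{(m,k)}$ to contractions of $g$ with $A^{(m+1,k)}$ and $A^{(m+1,k+1)}$, which converts the identity \eqref{6.8} for rank $m$ into the same identity for rank $m+1$ and yields a two-term recurrence \eqref{6.9}--\eqref{6.10} for the coefficients $\beta(m,k,p)$; one then only has to check that the closed form \eqref{6.24} satisfies this recurrence. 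Your other ingredients (the expansion of $A^{(m,k)}$ in terms of $|y|^{-2m-1+2p}y^{2m-k-2p}\delta^p$, the decomposition of $g$ into $y$- and $\delta$-trace-free pieces, and the commutator $(r+1)j_yi_y-r\,i_yj_y=|y|^2\,\mathrm{id}$ on $S^r$) are all correct and could support an alternative triangularization argument, but as written the proposal is a plan rather than a proof, and its central matching step needs to be reversed into an identity in $g$ before it can be made rigorous.
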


The proof of Proposition \ref{P6.1} involves several steps. The main part of the proof is contained in the following two lemmas.

\begin{lemma} \label{L6.1}
Given a tensor field  $g\in C^\infty({\R}^n\setminus\{0\};S^{m+1})$, let us fix a Cartesian coordinate system on ${\R}^n$, fix a value of the index $i_{m+1}$ and introduce the tensor field
$\tilde g\in C^\infty({\R}^n\setminus\{0\};S^m)$ by
\begin{equation}
\tilde g_{i_1\dots i_m}=g_{i_1\dots i_mi_{m+1}}.
                           \label{6.1}
\end{equation}
Let us also introduce the vector field $\tilde\delta$ by
$$
\tilde\delta_i=\delta_{ii_{m+1}}.
$$
Then, for every $0\le k\le m$,
\begin{align}\label{6.2}
    \begin{split}
       & (A^{(m,k)}/\tilde g)_{i_{k+1}\dots i_m}
=\frac{1}{m\!+\!1}\Big[\frac{1}{2m\!-\!2k\!+\!1}\,(A^{(m+1,k)}/g)_{i_{k+1}\dots i_{m+1}}\\
&\qquad\qquad-y_{i_{m+1}}\,(A^{(m+1,k+1)}/g)_{i_{k+1}\dots i_m}
-(m\!-\!k)\,\Big(\tilde\delta\,(A^{(m,k)}/g)\Big)_{i_{k+1}\dots i_m}\Big].
    \end{split}
\end{align}
\end{lemma}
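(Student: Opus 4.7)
The plan is to establish a single Leibniz-type recurrence expressing the coordinates of $A^{(m+1,k)}$ in terms of those of $A^{(m+1,k+1)}$ and $A^{(m,k)}$, and then to contract it against $g$, carefully distinguishing positions which are \emph{free} from positions which are \emph{contracted with $g$}.

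First I would exploit the representation $A^{(m+1,k)}=d^{2m+2-k}|y|^{2m-2k+1}$. Setting $\varphi(y)=|y|^{2m-2k-1}$, a single differentiation gives $\partial_{j}|y|^{2m-2k+1}=(2m-2k+1)y_{j}\varphi$. Applying the remaining $2m+1-k$ partial derivatives by the Leibniz rule and using $\partial_a\partial_b y_c=0$, only two types of terms survive; recognizing $\partial_{i_1\dots i_{2m+1-k}}\varphi=A^{(m+1,k+1)}_{i_1\dots i_{2m+1-k}}$ and $\partial_{i_1\dots\widehat{i_r}\dots i_{2m+1-k}}\varphi=A^{(m,k)}_{i_1\dots\widehat{i_r}\dots i_{2m+1-k}}$, one obtains the key recurrence
\begin{align*}
A^{(m+1,k)}_{i_1\dots i_{2m+2-k}}
&=(2m\!-\!2k\!+\!1)\,y_{i_{2m+2-k}}\,A^{(m+1,k+1)}_{i_1\dots i_{2m+1-k}}\\
&\quad +(2m\!-\!2k\!+\!1)\sum_{r=1}^{2m+1-k}\delta_{i_r i_{2m+2-k}}\,A^{(m,k)}_{i_1\dots\widehat{i_r}\dots i_{2m+1-k}}.
\end{align*}

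Next I would contract this identity with $g^{j_1\dots j_{m+1}}$ over the last $m+1$ index slots, using the total symmetry of $A^{(m+1,k)}$ to place the ``special'' slot $i_{2m+2-k}$ at the pinned free position whose value is $s:=i_{m+1}$. The $y$-term at once produces $(2m-2k+1)\,y_s\,(A^{(m+1,k+1)}/g)_{i_{k+1}\dots i_m}$. The Kronecker sum splits according to whether $r$ labels one of the $m-k$ remaining free slots or one of the $m+1$ contracted slots. If $r$ is a free slot $i_a$, then $\delta_{i_a,s}=\tilde\delta_{i_a}$, and contracting the surviving $A^{(m,k)}$ against $g$ gives $\tilde\delta_{i_a}(A^{(m,k)}/g)$ with $i_a$ omitted; summing the $m-k$ such terms and applying the symmetric-product formula produces the contribution $(m-k)\,\tilde\delta(A^{(m,k)}/g)$. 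If $r$ is a contracted slot $j_b$, the identity $\delta_{j_b,s}g^{\dots j_b\dots}=g^{\dots s}$ together with the symmetry of $g$ lets us move the pinned value $s$ to the ``end'' of the $g$-contraction, which by the definition of $\tilde g$ is exactly $(A^{(m,k)}/\tilde g)$; since all $m+1$ contracted positions yield the same expression, this piece contributes $(m+1)(A^{(m,k)}/\tilde g)$.

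Collecting all terms and isolating $(A^{(m,k)}/\tilde g)$ on one side yields precisely \eqref{6.2}. The only real obstacle is the combinatorial accounting: recognizing that the single $\delta$-sum in the Leibniz recurrence furnishes two geometrically distinct contributions, with the multiplicities $m-k$ and $m+1$ reflecting the numbers of free versus contracted positions, and that the symmetry of $g$ is what collapses the second piece to the single term $(m+1)(A^{(m,k)}/\tilde g)$ rather than leaving a formal sum. Everything else is a routine application of the Leibniz rule and of symmetric-product bookkeeping.
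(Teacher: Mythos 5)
Your proposal is correct and follows essentially the same route as the paper: the paper's key identity $\partial_{i_1\dots i_p}(|y|^\alpha y_k)=y_k\,\partial_{i_1\dots i_p}|y|^\alpha+p\,\sigma(i_1\dots i_p)(\delta_{i_1k}\,\partial_{i_2\dots i_p}|y|^\alpha)$ is exactly your Leibniz recurrence written with a symmetrization in place of your explicit sum over slots, and the subsequent contraction with $g$, with the split into $m-k$ free positions and $m+1$ contracted positions, reproduces the paper's bookkeeping (its intermediate formula yielding the coefficients $m-k$ and $m+1$) verbatim. The only difference is presentational, your explicit position-sum versus the paper's step-by-step expansion of symmetrizations.
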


\begin{proof}
%
The identity
$$
\partial_{i_1\dots i_p}(|y|^\alpha y_k)
=y_k\,\partial_{i_1\dots i_p}|y|^\alpha
+p\,\sigma(i_1\dots i_p)(\delta_{i_1k}\,\partial_{i_2\dots i_p}|y|^\alpha)
$$
holds for any integer $p\ge0$, any real $\alpha$ and any $1\le k\le n$. It is easily proved by induction on $p$.
Using this identity, we obtain
$$
\begin{aligned}
&\partial_{i_{k+1}\dots i_{m+1}j_1\dots j_{m+1}}|y|^{2m-2k+1}
=(2m\!-\!2k\!+\!1)\partial_{i_{k+1}\dots i_m j_1\dots j_{m+1}}(|y|^{2m-2k-1}y_{i_{m+1}})\\
&=(2m\!-\!2k\!+\!1)\bigg[y_{i_{m+1}}\,\partial_{i_{k+1}\dots i_m j_1\dots j_{m+1}}|y|^{2m-2k-1}\\
&+(2m\!-\!k\!+\!1)\sigma(i_{k+1}\dots i_m j_1\dots j_{m+1})\Big(\delta_{i_{m+1}j_{m+1}}\,\partial_{i_{k+1}\dots i_m j_1\dots j_m}|y|^{2m-2k-1}\Big)\bigg].
\end{aligned}
$$
Expanding the symmetrization $\sigma(i_{k+1}\dots i_m j_1\dots j_{m+1})$ with respect to the index $j_{m+1}$
(see \cite[Lemma 2.4.1]{sharafutdinov2012integral}), we write this in the form
$$
\begin{aligned}
\frac{1}{2m\!-\!2k\!+\!1}\,&\partial_{i_{k+1}\dots i_{m+1}j_1\dots j_{m+1}}|y|^{2m-2k+1}
=y_{i_{m+1}}\,\partial_{i_{k+1}\dots i_m j_1\dots j_{m+1}}|y|^{2m-2k-1}\\
&+\sigma(i_{k+1}\dots i_m j_1\dots j_m)\Big(\delta_{i_{m+1}j_{m+1}}\,\partial_{i_{k+1}\dots i_m j_1\dots j_m}|y|^{2m-2k-1}\\
&+(2m\!-\!k)\delta_{i_{m+1}j_1}\,\partial_{i_{k+1}\dots i_m j_2\dots j_{m+1}}|y|^{2m-2k-1}\Big).
\end{aligned}
$$
Since the tensor $\delta_{i_{m+1}j_{m+1}}\,\partial_{i_{k+1}\dots i_m j_1\dots j_m}|y|^{2m-2k-1}$ is symmetric in
$i_{k+1},\dots, i_m, j_1,\dots, j_m$, the formula can be written as follows:
\begin{equation}
\begin{aligned}
&\frac{1}{2m\!-\!2k\!+\!1}\,\partial_{i_{k+1}\dots i_{m+1}j_1\dots j_{m+1}}|y|^{2m-2k+1}
=y_{i_{m+1}}\,\partial_{i_{k+1}\dots i_m j_1\dots j_{m+1}}|y|^{2m-2k-1}\\
&+\delta_{i_{m+1}j_{m+1}}\,\partial_{i_{k+1}\dots i_m j_1\dots j_m}|y|^{2m-2k-1}\\
&+(2m\!-\!k)\sigma(i_{k+1}\dots i_m j_1\dots j_m)\Big(\delta_{i_{m+1}j_1}\,\partial_{i_{k+1}\dots i_m j_2\dots j_{m+1}}|y|^{2m-2k-1}\Big).
\end{aligned}
                           \label{6.3}
\end{equation}

By \eqref{3.5},
$$
\begin{aligned}
\partial_{i_{k+1}\dots i_m j_1\dots j_{m+1}}|y|^{2m-2k-1}&=A^{(m+1,k+1)}_{i_{k+1}\dots i_m j_1\dots j_{m+1}},\\
\partial_{i_{k+1}\dots i_m j_1\dots j_m}|y|^{2m-2k-1}&=A^{(m,k)}_{i_{k+1}\dots i_m j_1\dots j_m},\\
\partial_{i_{k+1}\dots i_m j_2\dots j_{m+1}}|y|^{2m-2k-1}&=A^{(m,k)}_{i_{k+1}\dots i_m j_2\dots j_{m+1}}.
\end{aligned}
$$
Substituting these expressions into \eqref{6.3}
\begin{equation}
\begin{aligned}
&\frac{1}{2m\!-\!2k\!+\!1}\,\partial_{i_{k+1}\dots i_{m+1}j_1\dots j_{m+1}}|y|^{2m-2k+1}
=y_{i_{m+1}}\,A^{(m+1,k+1)}_{i_{k+1}\dots i_m j_1\dots j_{m+1}}\\
&+\delta_{i_{m+1}j_{m+1}}\,A^{(m,k)}_{i_{k+1}\dots i_m j_1\dots j_m}\\
&+(2m\!-\!k)\,\sigma(i_{k+1}\dots i_m j_1\dots j_m)\Big(\delta_{i_{m+1}j_1}\,A^{(m,k)}_{i_{k+1}\dots i_m j_2\dots j_{m+1}}\Big).
\end{aligned}
                           \label{6.4}
\end{equation}

The equality
\begin{equation}
\begin{aligned}
&\sigma(i_{k+1}\dots i_m j_1\dots j_m)\Big(\delta_{i_{m+1}j_1}\,A^{(m,k)}_{i_{k+1}\dots i_m j_2\dots j_{m+1}}\Big)\\
&=\frac{1}{2m\!-\!k}\sigma(i_{k+1}\dots i_m)\sigma(j_1\dots j_m)
\Big[(m\!-\!k)\delta_{i_{m+1}i_m}\,A^{(m,k)}_{i_{k+1}\dots i_{m-1} j_1\dots j_{m+1}}\\
&\qquad\qquad\qquad\qquad\qquad\qquad\qquad\qquad+m\delta_{i_{m+1}j_1}\,A^{(m,k)}_{i_{k+1}\dots i_m j_2\dots j_{m+1}}\Big]
\end{aligned}
                           \label{6.5}
\end{equation}
can be easily proved based on the fact that the tensor $A^{(m,k)}$ is symmetric. Formally speaking, the first term
$(m\!-\!k)\delta_{i_{m+1}i_m}\,A^{(m,k)}_{i_{k+1}\dots i_{m-1} j_1\dots j_{m+1}}$ in brackets makes sense for $k\le m-2$ only. Nevertheless, the formula \eqref{6.5} holds for $k=m-1$ if we assume that
$A^{(m,k)}_{i_m\dots i_{m-1} j_1\dots j_{m+1}}=A^{(m,k)}_{j_1\dots j_{m+1}}$.
In the case of $k=m$, the first term in brackets is equal to zero because of the factor $(m-k)$. Thus, the formula \eqref{6.5} holds for all $0\le k\le m$.

Using \eqref{6.5}, the formula \eqref{6.4} becomes
$$
\begin{aligned}
&\frac{1}{2m\!-\!2k\!+\!1}\,\partial_{i_{k+1}\dots i_{m+1}j_1\dots j_{m+1}}|y|^{2m-2k+1}
=y_{i_{m+1}}\,A^{(m+1,k+1)}_{i_{k+1}\dots i_m j_1\dots j_{m+1}}\\
&+\delta_{i_{m+1}j_{m+1}}\,A^{(m,k)}_{i_{k+1}\dots i_m j_1\dots j_m}\\
&+(m\!-\!k)\,\sigma(i_{k+1}\dots i_m)\sigma(j_1\dots j_m)\Big(\delta_{i_{m+1}i_m}\,A^{(m,k)}_{i_{k+1}\dots i_{m-1} j_1\dots j_{m+1}}\Big)\\
&+m\,\sigma(i_{k+1}\dots i_m)\sigma(j_1\dots j_m)\Big(\delta_{i_{m+1}j_1}\,A^{(m,k)}_{i_{k+1}\dots i_m j_2\dots j_{m+1}}\Big).
\end{aligned}
$$
In the last line, the symmetrization$\sigma(i_{k+1}\dots i_m)$ is unnecessary, as the tensor
$A^{(m,k)}_{i_{k+1}\dots i_m j_2\dots j_{m+1}}$ is already symmetric in these indices. The formula simplifies to the following one:
\begin{equation}
\begin{aligned}
&\frac{1}{2m\!-\!2k\!+\!1}\,\partial_{i_{k+1}\dots i_{m+1}j_1\dots j_{m+1}}|y|^{2m-2k+1}
=y_{i_{m+1}}\,A^{(m+1,k+1)}_{i_{k+1}\dots i_m j_1\dots j_{m+1}}\\
&+\delta_{i_{m+1}j_{m+1}}\,A^{(m,k)}_{i_{k+1}\dots i_m j_1\dots j_m}\\
&+(m\!-\!k)\,\sigma(i_{k+1}\dots i_m)\sigma(j_1\dots j_m)\Big(\delta_{i_{m+1}i_m}\,A^{(m,k)}_{i_{k+1}\dots i_{m-1} j_1\dots j_{m+1}}\Big)\\
&+m\,\sigma(j_1\dots j_m)\Big(\delta_{i_{m+1}j_1}\,A^{(m,k)}_{i_{k+1}\dots i_m j_2\dots j_{m+1}}\Big).
\end{aligned}
                           \label{6.6}
\end{equation}

The formulas \eqref{6.3} and \eqref{6.6} imply
$$
\begin{aligned}
&\frac{1}{2m\!-\!2k\!+\!1}\,(A^{(m+1,k)}/g)_{i_{k+1}\dots i_{m+1}}
=\bigg[y_{i_{m+1}}\,A^{(m+1,k+1)}_{i_{k+1}\dots i_m j_1\dots j_{m+1}}\\
&+\delta_{i_{m+1}j_{m+1}}\,A^{(m,k)}_{i_{k+1}\dots i_m j_1\dots j_m}\\
&+(m\!-\!k)\,\sigma(i_{k+1}\dots i_m)\sigma(j_1\dots j_m)\Big(\delta_{i_{m+1}i_m}\,A^{(m,k)}_{i_{k+1}\dots i_{m-1} j_1\dots j_{m+1}}\Big)\\
&+m\,\sigma(j_1\dots j_m)\Big(\delta_{i_{m+1}j_1}\,A^{(m,k)}_{i_{k+1}\dots i_m j_2\dots j_{m+1}}\Big)\bigg]
g^{j_1\dots j_{m+1}}.
\end{aligned}
$$
The symmetrization $\sigma(j_1\dots j_m)$ can be omitted after expanding the expression, as the tensor $g^{j_1\dots j_{m+1}}$ is symmetric in these indices. We thus obtain
$$
\begin{aligned}
\frac{1}{2m\!-\!2k\!+\!1}\,&(A^{(m+1,k)}/g)_{i_{k+1}\dots i_{m+1}}
=y_{i_{m+1}}\,A^{(m+1,k+1)}_{i_{k+1}\dots i_m j_1\dots j_{m+1}}g^{j_1\dots j_{m+1}}\\
&+\delta_{i_{m+1}j_{m+1}}\,A^{(m,k)}_{i_{k+1}\dots i_m j_1\dots j_m}g^{j_1\dots j_{m+1}}\\
&+(m\!-\!k)\,\sigma(i_{k+1}\dots i_m)\Big(\delta_{i_{m+1}i_m}\,A^{(m,k)}_{i_{k+1}\dots i_{m-1} j_1\dots j_{m+1}}g^{j_1\dots j_{m+1}}\Big)\\
&+m\,\delta_{i_{m+1}j_1}\,A^{(m,k)}_{i_{k+1}\dots i_m j_2\dots j_{m+1}}g^{j_1\dots j_{m+1}}.
\end{aligned}
$$
Implementing the contraction with the Kronecker tensor in second and last lines, we obtain
$$
\begin{aligned}
\frac{1}{2m\!-\!2k\!+\!1}\,&(A^{(m+1,k)}/g)_{i_{k+1}\dots i_{m+1}}
=y_{i_{m+1}}\,A^{(m+1,k+1)}_{i_{k+1}\dots i_m j_1\dots j_{m+1}}g^{j_1\dots j_{m+1}}\\
&+A^{(m,k)}_{i_{k+1}\dots i_m j_1\dots j_m}g_{i_{m+1}}^{j_1\dots j_m}\\
&+(m\!-\!k)\,\sigma(i_{k+1}\dots i_m)\Big(\delta_{i_{m+1}i_m}\,A^{(m,k)}_{i_{k+1}\dots i_{m-1} j_1\dots j_{m+1}}g^{j_1\dots j_{m+1}}\Big)\\
&+m\,A^{(m,k)}_{i_{k+1}\dots i_m j_1\dots j_m}g_{i_{m+1}}^{j_1\dots j_m}.
\end{aligned}
$$
In the last line, we have replaced the summation indices $j_2,\dots, j_{m+1}$ with $j_1,\dots, j_m$. We see now that second and last lines contain similar terms. Grouping these terms, we write the formula as follows:
\begin{equation}
\begin{aligned}
\frac{1}{2m\!-\!2k\!+\!1}\,&(A^{(m+1,k)}/g)_{i_{k+1}\dots i_{m+1}}
=y_{i_{m+1}}\,A^{(m+1,k+1)}_{i_{k+1}\dots i_m j_1\dots j_{m+1}}g^{j_1\dots j_{m+1}}\\
&+(m\!-\!k)\,\sigma(i_{k+1}\dots i_m)\Big(\delta_{i_{m+1}i_m}\,A^{(m,k)}_{i_{k+1}\dots i_{m-1} j_1\dots j_{m+1}}g^{j_1\dots j_{m+1}}\Big)\\
&+(m+1)\,A^{(m,k)}_{i_{k+1}\dots i_m j_1\dots j_m}g_{i_{m+1}}^{j_1\dots j_m}.
\end{aligned}
                           \label{6.7}
\end{equation}

Recall that the value of the index $i_{m+1}$ is fixed and $\tilde g^{j_1\dots j_m}=g_{i_{m+1}}^{j_1\dots j_m}$.
The formula \eqref{6.7} can be written as
$$
\begin{aligned}
\frac{1}{2m\!-\!2k\!+\!1}\,&(A^{(m+1,k)}/g)_{i_{k+1}\dots i_{m+1}}
=y_{i_{m+1}}\,(A^{(m+1,k+1)}/g)_{i_{k+1}\dots i_m}\\
&+(m\!-\!k)\,\Big(\tilde\delta\,(A^{(m,k)}/g)\Big)_{i_{k+1}\dots i_m}
+(m+1)\,(A^{(m,k)}/\tilde g)_{i_{k+1}\dots i_m}.
\end{aligned}
$$
This is equivalent to \eqref{6.2}.
\end{proof}

\begin{lemma} \label{L6.2}
For a tensor field $g\in C^\infty({\R}^n\setminus\{0\};S^m)$, the following identity holds:
\begin{equation}
A^{(m,0)}/g=m!(2m-1)!!\,|y|^{-1}g
+\sum\limits_{k=1}^m
\sum\limits_{p=0}^{\min(k,m-k)}\!\!\beta(m,k,p)\,i_y^{k-p}i^pj_y^p(A^{(m,k)}/g),
                           \label{6.8}
\end{equation}
where the coefficients $\beta(m,k,p)$ are uniquely determined by the recurrent formulas
\begin{equation}
\begin{aligned}
\tilde\beta(m\!+\!1,k,p)
&=\frac{1}{2m\!-\!2k\!+\!1}\,\beta(m,k,p)
-\frac{k\!-\!p}{k}\,\beta(m,k\!-\!1,p)\\
&+\frac{m\!-\!k\!-\!p\!+\!2}{\textstyle k}\,\beta(m,k\!-\!1,p\!-\!1)
\end{aligned}
                           \label{6.9}
\end{equation}
and
\begin{equation}
\beta(m+1,k,p)=\left\{\begin{array}{ll}
(2m\!+\!1)\big(\tilde\beta(m+1,k,p)+1\big)&\mbox{\rm if}\ (k,p)=(1,0),\\
[8pt]
(2m\!+\!1)\big(\tilde\beta(m+1,k,p)-m\big)&\mbox{\rm if}\ (k,p)=(1,1),\\
[8pt]
(2m\!+\!1)\tilde\beta(m+1,k,p)&\mbox{\rm otherwise}
\end{array}\right.
                           \label{6.10}
\end{equation}
with the convention
\begin{equation}
\beta(m,k,p)=0\quad\mbox{\rm if either}\ k=0\ \mbox{\rm or}\ k>m\ \mbox{\rm or}\ p<0\ \mbox{\rm or}\ p>\min(k,m-k).
                           \label{6.11}
\end{equation}
\end{lemma}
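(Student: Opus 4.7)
The plan is to prove \eqref{6.8} by induction on $m$, with Lemma 6.1 acting as the bridge between ranks $m$ and $m+1$. The base case $m=0$ is immediate: $A^{(0,0)}=|y|^{-1}$, so $A^{(0,0)}/g=|y|^{-1}g=0!\,(-1)!!\,|y|^{-1}g$, and the double sum is vacuous, with all $\beta(0,k,p)$ being zero by convention \eqref{6.11}.

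For the inductive step, fix $g\in C^\infty(\mathbb{R}^n\setminus\{0\};S^{m+1})$, a Cartesian coordinate system, and a value of the index $i_{m+1}$; form the slice $\tilde g\in S^m$ and the vector $\tilde\delta$ as in Lemma 6.1. Apply the inductive hypothesis to $\tilde g$ to expand $A^{(m,0)}/\tilde g$, and then use Lemma 6.1 to rewrite every $A^{(m,k)}/\tilde g$ appearing (including the one on the left-hand side, via the case $k=0$) as a combination of $A^{(m+1,k)}/g$, $A^{(m+1,k+1)}/g$, and the cross term $\tilde\delta\cdot(A^{(m,k)}/g)$. Reading the resulting identity as a tensor equation in all $m+1$ free indices, the scalar factor $y_{i_{m+1}}$ symmetrizes into the operator $i_y$ of symmetric multiplication by $y$, while $\tilde\delta$ symmetrizes into the operator $i$ of symmetric multiplication by $\delta$. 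The cross term is then eliminated using the identity $A^{(m,k)}/g=-\frac{1}{k+1}\,j_y(A^{(m+1,k+1)}/g)$, which follows by contracting $j_yA^{(m+1,k+1)}=-(k+1)A^{(m,k)}$ (the first assertion of Lemma \ref{L5.1}) with $g$. Solving for $A^{(m+1,0)}/g$ and collecting coefficients of $i_y^{k-p}i^pj_y^p(A^{(m+1,k)}/g)$ produces the three-term recurrence \eqref{6.9}, with the overall factor $2m+1$ in \eqref{6.10} arising by inverting the coefficient $\frac{1}{2m+1}$ that multiplies $A^{(m+1,0)}/g$ in the $k=0$ case of Lemma 6.1.

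The main obstacle is the bookkeeping that forces the two exceptional values in \eqref{6.10}. Two contributions stand outside the generic rule \eqref{6.9}. First, the leading term $m!(2m-1)!!\,|y|^{-1}\tilde g$ of the inductive hypothesis, after symmetrization over $i_{m+1}$, produces the expected leading contribution $(m+1)!(2m+1)!!\,|y|^{-1}g$ at rank $m+1$, but it also feeds an extra $i_y$-type piece into the coefficient of $A^{(m+1,1)}/g$ that must be absorbed at $(k,p)=(1,0)$, producing the $+1$ shift. Second, the $k=0$ cross term $-m\,\tilde\delta\cdot(A^{(m,0)}/g)$ of Lemma 6.1, rewritten via $A^{(m,0)}/g=-j_y(A^{(m+1,1)}/g)$, contributes $m\,i\,j_y(A^{(m+1,1)}/g)$, which is absorbed at $(k,p)=(1,1)$ and produces the $-m$ shift. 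Verifying these two adjustments together with the straightforward combinatorics behind \eqref{6.9} for all remaining $(k,p)$ is the computational heart of the argument. Uniqueness of $\beta(m,k,p)$ is then automatic, since the recurrence \eqref{6.9}--\eqref{6.10} with the boundary convention \eqref{6.11} determines each coefficient from those at rank $m-1$, and the rank-$0$ values are fixed.
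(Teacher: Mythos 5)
Your proposal follows essentially the same route as the paper: induction on $m$ with the base case $A^{(0,0)}=|y|^{-1}$, slicing off the index $i_{m+1}$, expanding $A^{(m,0)}/\tilde g$ by the inductive hypothesis, converting via Lemma \ref{L6.1}, eliminating the $\tilde\delta$-cross terms through $A^{(m,k)}=-\tfrac{1}{k+1}\,j_yA^{(m+1,k+1)}$ from Lemma \ref{L5.1}, symmetrizing so that $y_{i_{m+1}}$ and $\tilde\delta$ become $i_y$ and $i$, and reading off the recurrence \eqref{6.9}--\eqref{6.10} with the factor $2m+1$ coming from the $k=0$ case of Lemma \ref{L6.1}. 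The one inaccuracy is your attribution of the $+1$ shift at $(k,p)=(1,0)$: it does not come from the leading term $m!(2m-1)!!\,|y|^{-1}\tilde g$ (which symmetrizes cleanly into $(m+1)!(2m+1)!!\,|y|^{-1}g$ with no remainder) but rather from the $y_{i_{m+1}}\,(A^{(m+1,1)}/g)$ term in the $k=0$ instance of Lemma \ref{L6.1}, exactly parallel to the origin of the $-m$ shift; this does not affect the soundness of the plan.
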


\begin{proof}
The proof proceeds by induction on $m$. For $m=0$, the sum on the right-hand side of \eqref{6.8} is absent and the formula holds since $A^{(m,0)}=|y|^{-1}$. Assume \eqref{6.8} to be valid for some $m\ge0$ and let $g\in C^\infty({\R}^n\setminus\{0\};S^{m+1})$. We fix a value of the index $i_{m+1}$ and introduce the tensor field $\tilde g\in C^\infty({\R}^n\setminus\{0\};S^m)$ by \eqref{6.1}. By the induction hypothesis, the formula \eqref{6.8} holds for $\tilde g$. Let us write the formula in coordinates
\begin{equation}
\begin{aligned}
&A^{(m,0)}_{i_1\dots i_mj_1\dots j_m}g_{i_{m+1}}^{j_1\dots j_m}=m!(2m-1)!!\,|y|^{-1}g_{i_1\dots i_{m+1}}\\
&+\sigma(i_1\dots i_m)\sum\limits_{k=1}^m
\sum\limits_{p=0}^{\min(k,m-k)}\!\!\beta(m,k,p)\,
y^{l_1}\dots y^{l_p}\times\\
&\qquad\qquad\qquad\times\delta_{i_1i_2}\dots\delta_{i_{2p-1}i_{2p}}y_{i_{2p+1}}\dots y_{i_{k+p}}
A^{(m,k)}_{i_{k+p+1}\dots i_ml_1\dots l_pj_1\dots j_m}g_{i_{m+1}}^{j_1\dots j_m}.
\end{aligned}
                           \label{6.12}
\end{equation}
By Lemma \ref{L6.1},
\begin{equation}
\begin{aligned}
&A^{(m,k)}_{i_{k+p+1}\dots i_ml_1\dots l_pj_1\dots j_m}g_{i_{m+1}}^{j_1\dots j_m}
=(A^{(m,k)}/\tilde g)_{i_{k+p+1}\dots i_ml_1\dots l_p}\\
&=\frac{1}{m\!+\!1}\Big[\frac{1}{2m\!-\!2k\!+\!1}\,(A^{(m+1,k)}/g)_{i_{k+p+1}\dots i_{m+1}l_1\dots l_p}
\\
&\quad-y_{i_{m+1}}\,(A^{(m+1,k+1)}/g)_{i_{k+p+1}\dots i_ml_1\dots l_p}-(m\!-\!k)\,\Big(\tilde\delta\,(A^{(m,k)}/g)\Big)_{i_{k+p+1}\dots i_ml_1\dots l_p}\Big].
\end{aligned}
                           \label{6.13}
\end{equation}

In the case of $k=m$, the last term on the right-hand side of \eqref{6.13} is equal to zero. In the case of $k<m$, we transform the last term on the right-hand side of \eqref{6.13} using \eqref{5.1} as follows:
\begin{equation}
\begin{aligned}
&\Big(\tilde\delta\,(A^{(m,k)}/g)\Big)_{i_{k+p+1}\dots i_ml_1\dots l_p}
=-\frac{1}{k+1}\Big(\tilde\delta(j_yA^{(m+1,k+1)}/g)\Big)_{i_{k+p+1}\dots i_ml_1\dots l_p}\\
&=-\frac{1}{k+1}\sigma(i_{k+p+1}\dots i_ml_1\dots l_p)
\Big(\delta_{i_{m+1}i_{k+p+1}}(j_yA^{(m+1,k+1)}/g)_{i_{k+p+2}\dots i_ml_1\dots l_p}\Big).
\end{aligned}
                           \label{6.14}
\end{equation}
The equality
$$
\begin{aligned}
\sigma(i_{k+p+1}\dots i_m&l_1\dots l_p)
\Big(\delta_{i_{m+1}i_{k+p+1}}(j_yA^{(m+1,k+1)}/g)_{i_{k+p+2}\dots i_ml_1\dots l_p}\Big)\\
&=\frac{m\!-\!k\!-\!p}{m\!-\!k}\,\sigma(i_{k+p+1}\dots i_m)
\Big(\delta_{i_{m+1}i_{k+p+1}}(j_yA^{(m+1,k+1)}/g)_{i_{k+p+2}\dots i_ml_1\dots l_p}\Big)\\
&+\frac{p}{m\!-\!k}\,\sigma(l_1\dots l_p)
\Big(\delta_{i_{m+1}l_1}(j_yA^{(m+1,k+1)}/g)_{i_{k+p+1}\dots i_ml_2\dots l_p}\Big)
\end{aligned}
$$
holds since $j_yA^{(m+1,k+1)}/g$ is a symmetric tensor.
Using this, the formula \eqref{6.14} takes the form
\begin{equation}
\begin{aligned}
&\Big(\tilde\delta\,(A^{(m,k)}/g)\Big)_{i_{k+p+1}\dots i_ml_1\dots l_p}\\
&=-\frac{m\!-\!k\!-\!p}{(m\!-\!k)(k\!+\!1)}\,\sigma(i_{k+p+1}\dots i_m)
\Big(\delta_{i_{m+1}i_{k+p+1}}(j_yA^{(m+1,k+1)}/g)_{i_{k+p+2}\dots i_ml_1\dots l_p}\Big)\\
&-\frac{p}{(m\!-\!k)(k\!+\!1)}\,\sigma(l_1\dots l_p)
\Big(\delta_{i_{m+1}l_1}(j_yA^{(m+1,k+1)}/g)_{i_{k+p+1}\dots i_ml_2\dots l_p}\Big).
\end{aligned}
                           \label{6.15}
\end{equation}

Replacing the last term on the right-hand side of \eqref{6.13} with its value \eqref{6.15}, we obtain
\begin{equation}
\begin{aligned}
&A^{(m,k)}_{i_{k+p+1}\dots i_ml_1\dots l_pj_1\dots j_m}g_{i_{m+1}}^{j_1\dots j_m}
=(A^{(m,k)}/\tilde g)_{i_{k+p+1}\dots i_ml_1\dots l_p}\\
&=\frac{1}{m\!+\!1}\bigg[\frac{1}{2m\!-\!2k\!+\!1}\,(A^{(m+1,k)}/g)_{i_{k+p+1}\dots i_{m+1}l_1\dots l_p}
-y_{i_{m+1}}\,(A^{(m+1,k+1)}/g)_{i_{k+p+1}\dots i_ml_1\dots l_p}\\
&\qquad\qquad+\frac{m\!-\!k\!-\!p}{k\!+\!1}\,\sigma(i_{k+p+1}\dots i_m)
\Big(\delta_{i_{m+1}i_{k+p+1}}(j_yA^{(m+1,k+1)}/g)_{i_{k+p+2}\dots i_ml_1\dots l_p}\Big)\\
&\qquad\qquad+\frac{p}{k\!+\!1}\,\sigma(l_1\dots l_p)
\Big(\delta_{i_{m+1}l_1}(j_yA^{(m+1,k+1)}/g)_{i_{k+p+1}\dots i_ml_2\dots l_p}\Big)
\bigg].
\end{aligned}
                           \label{6.16}
\end{equation}
It is not quite obvious now that two last lines on the right-hand side of \eqref{6.16} are equal to zero in the case of $k=m$. Nevertheless, in the case of $k=m$ we are interested in \eqref{6.16} for $p=0$ only, as is seen from \eqref{6.12}. For $k=m$ and $p=0$,  two last lines on the right-hand side of \eqref{6.16} are equal to zero.

Substituting the expression \eqref{6.16} into \eqref{6.12}. After the substitution, the symmetrization $\sigma(i_{k+p+1}\dots i_m)$ can be omitted because of the presence of the ``larger'' symmetrization $\sigma(i_1\dots i_m)$. The symmetrization $\sigma(l_1\dots l_p)$ can be also omitted because of the presence of the factor $y^{l_1}\dots y^{l_p}$. We thus obtain
$$
\begin{aligned}
&A^{(m,0)}_{i_1\dots i_mj_1\dots j_m}g_{i_{m+1}}^{j_1\dots j_m}=m!(2m-1)!!\,|y|^{-1}g_{i_1\dots i_{m+1}}\\
&+\frac{1}{m\!+\!1}\,\sigma(i_1\dots i_m)\sum\limits_{k=1}^m
\sum\limits_{p=0}^{\min(k,m-k)}\!\!\beta(m,k,p)\,
\delta_{i_1i_2}\dots\delta_{i_{2p-1}i_{2p}}\,y_{i_{2p+1}}\dots y_{i_{k+p}}\,y^{l_1}\dots y^{l_p}\times\\
&\times
\bigg[\frac{1}{2m\!-\!2k\!+\!1}\,(A^{(m+1,k)}/g)_{i_{k+p+1}\dots i_{m+1}l_1\dots l_p}
-y_{i_{m+1}}\,(A^{(m+1,k+1)}/g)_{i_{k+p+1}\dots i_ml_1\dots l_p}\\
&\qquad+\frac{m\!-\!k\!-\!p}{k\!+\!1}\,\delta_{i_{m+1}i_{k+p+1}}(j_yA^{(m+1,k+1)}/g)_{i_{k+p+2}\dots i_ml_1\dots l_p}\\
&\qquad+\frac{p}{k\!+\!1}\,\delta_{i_{m+1}l_1}(j_yA^{(m+1,k+1)}/g)_{i_{k+p+1}\dots i_ml_2\dots l_p}
\bigg].
\end{aligned}
$$
After pulling the factor $y^{l_1}\dots y^{l_p}$ inside brackets, this becomes
$$
\begin{aligned}
&A^{(m,0)}_{i_1\dots i_mj_1\dots j_m}g_{i_{m+1}}^{j_1\dots j_m}=m!(2m-1)!!\,|y|^{-1}g_{i_1\dots i_{m+1}}\\
&+\frac{1}{m\!+\!1}\,\sigma(i_1\dots i_m)\sum\limits_{k=1}^m
\sum\limits_{p=0}^{\min(k,m-k)}\!\!\beta(m,k,p)\,
\delta_{i_1i_2}\dots\delta_{i_{2p-1}i_{2p}}\,y_{i_{2p+1}}\dots y_{i_{k+p}}\times\\
&\times
\bigg[\frac{1}{2m\!-\!2k\!+\!1}\,(j_y^p\,A^{(m+1,k)}/g)_{i_{k+p+1}\dots i_{m+1}}
-y_{i_{m+1}}\,(j_y^p\,A^{(m+1,k+1)}/g)_{i_{k+p+1}\dots i_m}\\
&\qquad+\frac{m\!-\!k\!-\!p}{k\!+\!1}\,\delta_{i_{m+1}i_{k+p+1}}(j_y^{p+1}\,A^{(m+1,k+1)}/g)_{i_{k+p+2}\dots i_m}\\
&\qquad+\frac{p}{k\!+\!1}\,y_{i_{m+1}}(j_y^p\,A^{(m+1,k+1)}/g)_{i_{k+p+1}\dots i_m}
\bigg].
\end{aligned}
$$
Observe that second and last terms in brackets differ by coefficients only. After grouping these terms, the formula becomes
$$
\begin{aligned}
&A^{(m,0)}_{i_1\dots i_mj_1\dots j_m}g_{i_{m+1}}^{j_1\dots j_m}=m!(2m-1)!!\,|y|^{-1}g_{i_1\dots i_{m+1}}\\
&+\frac{1}{m\!+\!1}\,\sigma(i_1\dots i_m)\sum\limits_{k=1}^m
\sum\limits_{p=0}^{\min(k,m-k)}\!\!\beta(m,k,p)\,
\delta_{i_1i_2}\dots\delta_{i_{2p-1}i_{2p}}\,y_{i_{2p+1}}\dots y_{i_{k+p}}\times\\
&\times
\bigg[\frac{1}{2m\!-\!2k\!+\!1}\,(j_y^p\,A^{(m+1,k)}/g)_{i_{k+p+1}\dots i_{m+1}}
-\frac{k\!-\!p\!+\!1}{k\!+\!1}\,y_{i_{m+1}}\,(j_y^p\,A^{(m+1,k+1)}/g)_{i_{k+p+1}\dots i_m}\\
&\qquad+\frac{m\!-\!k\!-\!p}{k\!+\!1}\,\delta_{i_{m+1}i_{k+p+1}}(j_y^{p+1}\,A^{(m+1,k+1)}/g)_{i_{k+p+2}\dots i_m}
\bigg].
\end{aligned}
$$
Next, we pull the factor $\delta_{i_{2p-1}i_{2p}}\,y_{i_{2p+1}}\dots y_{i_{k+p}}$ inside brackets
\begin{equation}
\begin{aligned}
&A^{(m,0)}_{i_1\dots i_mj_1\dots j_m}g_{i_{m+1}}^{j_1\dots j_m}=m!(2m-1)!!\,|y|^{-1}g_{i_1\dots i_{m+1}}\\
&+\frac{1}{m\!+\!1}\sum\limits_{k=1}^m
\sum\limits_{p=0}^{\min(k,m-k)}\!\!\beta(m,k,p)
\bigg[\frac{1}{2m\!-\!2k\!+\!1}\Big(i_y^{k-p}i^pj_y^p(A^{(m+1,k)}/g)\Big)_{i_1\dots i_{m+1}}\\
&\qquad\qquad\qquad\qquad\qquad\qquad\quad-\frac{k\!-\!p\!+\!1}{k\!+\!1}\,y_{i_{m+1}}\Big(i_y^{k-p}i^pj_y^p(A^{(m+1,k+1)}/g)\Big)_{i_1\dots i_m}\\
&\qquad\qquad\qquad\qquad\qquad\qquad\quad+\frac{m\!-\!k\!-\!p}{k\!+\!1}\Big(
i_{\tilde\delta}\,i_y^{k-p}i^pj_y^{p+1}(A^{(m+1,k+1)}/g)\Big)_{i_1\dots i_m}
\bigg].
\end{aligned}
                           \label{6.17}
\end{equation}

Next, we apply Lemma \ref{L6.1} with $k=0$. More precisely,
we reproduce the formula  \eqref{6.7} from the proof of the lemma for $k=0$
$$
\begin{aligned}
&(A^{(m+1,0)}/g)_{i_1\dots i_{m+1}}
=(m\!+\!1)(2m\!+\!1)\,A^{(m,0)}_{i_1\dots i_m j_1\dots j_m}g_{i_{m+1}}^{j_1\dots j_m}\\
&+(2m\!+\!1)y_{i_{m+1}}\,A^{(m+1,1)}_{i_1\dots i_m j_1\dots j_{m+1}}g^{j_1\dots j_{m+1}}\\
&+m(2m\!+\!1)\,\sigma(i_1\dots i_m)\Big(\delta_{i_{m+1}i_m}\,A^{(m,0)}_{i_1\dots i_{m-1} j_1\dots j_{m+1}}g^{j_1\dots j_{m+1}}\Big).
\end{aligned}
$$
This can be written in the form
$$
\begin{aligned}
(A^{(m+1,0)}&/g)_{i_1\dots i_{m+1}}
=(m\!+\!1)(2m\!+\!1)\,A^{(m,0)}_{i_1\dots i_m j_1\dots j_m}g_{i_{m+1}}^{j_1\dots j_m}\\
&+(2m\!+\!1)y_{i_{m+1}}\,(A^{(m+1,1)}/g)_{i_1\dots i_m}
+m(2m\!+\!1)\Big(i_{\tilde\delta}(A^{(m,0)}/g)\Big)_{i_1\dots i_m}.
\end{aligned}
$$
By \eqref{5.1}, $A^{(m,0)}=-j_yA^{(m+1,1)}$. Substituting this expression into the last line of the previous formula, we obtain
\begin{equation}
\begin{aligned}
&(A^{(m+1,0)}/g)_{i_1\dots i_{m+1}}
=(m\!+\!1)(2m\!+\!1)\,A^{(m,0)}_{i_1\dots i_m j_1\dots j_m}g_{i_{m+1}}^{j_1\dots j_m}\\
&+(2m\!+\!1)y_{i_{m+1}}\,(A^{(m+1,1)}/g)_{i_1\dots i_m}
-m(2m\!+\!1)\Big(i_{\tilde\delta}\,j_y(A^{(m+1,1)}/g)\Big)_{i_1\dots i_m}.
\end{aligned}
                           \label{6.18}
\end{equation}

Now, we replace the first term $A^{(m,0)}_{i_1\dots i_m j_1\dots j_m}g_{i_{m+1}}^{j_1\dots j_m}$ on the right-hand side of
\eqref{6.18} with its expression \eqref{6.17}
\begin{equation}
\begin{aligned}
&\frac{1}{2m\!+\!1}(A^{(m+1,0)}/g)_{i_1\dots i_{m+1}}
=(m\!+\!1)!(2m-1)!!\,|y|^{-1}g_{i_1\dots i_{m+1}}\\
&+\sum\limits_{k=1}^m
\sum\limits_{p=0}^{\min(k,m-k)}\!\!\beta(m,k,p)
\bigg[\frac{1}{2m\!-\!2k\!+\!1}\Big(i_y^{k-p}i^pj_y^p(A^{(m+1,k)}/g)\Big)_{i_1\dots i_{m+1}}\\
&\qquad\qquad\qquad\qquad\qquad\qquad\quad-\frac{k\!-\!p\!+\!1}{k\!+\!1}\,y_{i_{m+1}}\Big(i_y^{k-p}i^pj_y^p(A^{(m+1,k+1)}/g)\Big)_{i_1\dots i_m}\\
&\qquad\qquad\qquad\qquad\qquad\qquad\quad+\frac{m\!-\!k\!-\!p}{k\!+\!1}\Big(
i_{\tilde\delta}\,i_y^{k-p}i^pj_y^{p+1}(A^{(m+1,k+1)}/g)\Big)_{i_1\dots i_m}
\bigg]\\
&+y_{i_{m+1}}\,\,(A^{(m+1,1)}/g)_{i_1\dots i_m}
-m\Big(i_{\tilde\delta}\,j_y(A^{(m+1,1)}/g)\Big)_{i_1\dots i_m}.
\end{aligned}
                           \label{6.19}
\end{equation}

From now on, we again let $i_{m+1}$ be an arbitrary index.
We apply the symmetrization $\sigma(i_1\dots i_{m+1})$ to the equation \eqref{6.19}. The operator $i_{\tilde\delta}$ becomes $i$ after the symmetrization and the result can be written in the coordinate-free form (recall that operators $i_y$ and $i$ commute)
\begin{equation}
\begin{aligned}
&\frac{1}{2m\!+\!1}\,A^{(m+1,0)}/g
=(m\!+\!1)!(2m-1)!!\,|y|^{-1}\,g\\
&+\sum\limits_{k=1}^m
\sum\limits_{p=0}^{\min(k,m-k)}\!\!\beta(m,k,p)
\bigg[\frac{1}{2m\!-\!2k\!+\!1}\,i_y^{k-p}i^pj_y^p(A^{(m+1,k)}/g)\\
&\qquad\qquad-\frac{k\!-\!p\!+\!1}{k\!+\!1}\,i_y^{k-p+1}i^pj_y^p(A^{(m+1,k+1)}/g)
+\frac{m\!-\!k\!-\!p}{k\!+\!1}\,i_y^{k-p}i^{p+1}j_y^{p+1}(A^{(m+1,k+1)}/g)
\bigg]\\
&+i_y\,(A^{(m+1,1)}/g)
-m\,ij_y\,(A^{(m+1,1)}/g).
\end{aligned}
                           \label{6.20}
\end{equation}

Let us write \eqref{6.20} in the form
$$
\begin{aligned}
&\frac{1}{2m\!+\!1}\,A^{(m+1,0)}/g
=(m\!+\!1)!(2m-1)!!\,|y|^{-1}\,g\\
&+\sum\limits_{k=1}^m\sum\limits_{p=0}^{\min(k,m-k)}\!\!
\frac{1}{2m\!-\!2k\!+\!1}\,\beta(m,k,p)\,i_y^{k-p}i^pj_y^p(A^{(m+1,k)}/g)\\
&-\sum\limits_{k'=1}^m\sum\limits_{p=0}^{\min(k',m-k')}\!\!
\frac{k'\!-\!p\!+\!1}{k'\!+\!1}\,\beta(m,k',p)\,i_y^{k'-p+1}i^pj_y^p(A^{(m+1,k'+1)}/g)\\
&+\sum\limits_{k'=1}^m\sum\limits_{p'=0}^{\min(k',m-k')}\!\!
\frac{m\!-\!k'\!-\!p'}{k'\!+\!1}\,\beta(m,k',p')\,i_y^{k'-p'}i^{p'+1}j_y^{p'+1}(A^{(m+1,k'+1)}/g)\\
&+i_y\,(A^{(m+1,1)}/g)
-m\,ij_y\,(A^{(m+1,1)}/g).
\end{aligned}
$$
We change summation variables by $k'=k-1$ in the second sum and by $k'=k-1,p'=p-1$ in the third sum. The formula becomes
\begin{equation}
\begin{aligned}
&\frac{1}{2m\!+\!1}\,A^{(m+1,0)}/g
=(m\!+\!1)!(2m-1)!!\,|y|^{-1}\,g\\
&+\sum\limits_{k=1}^m\sum\limits_{p=0}^{\min(k,m-k)}\!\!
\frac{1}{2m\!-\!2k\!+\!1}\,\beta(m,k,p)\,i_y^{k-p}i^pj_y^p(A^{(m+1,k)}/g)\\
&-\sum\limits_{k=2}^{m+1}\sum\limits_{p=0}^{\min(k-1,m-k+1)}
\frac{k\!-\!p}{k}\,\beta(m,k-1,p)\,i_y^{k-p}i^pj_y^p(A^{(m+1,k)}/g)\\
&+\sum\limits_{k=2}^{m+1}\sum\limits_{p=1}^{\min(k-1,m-k+1)+1}
\frac{m\!-\!k\!-\!p\!+\!2}{k}\,\beta(m,k-1,p-1)\,i_y^{k-p}i^{p}j_y^{p}(A^{(m+1,k)}/g)\\
&+i_y\,(A^{(m+1,1)}/g)
-m\,ij_y\,(A^{(m+1,1)}/g).
\end{aligned}
                           \label{6.21}
\end{equation}

We are going to equate summation limits in three sums on the right-hand side of \eqref{6.21} in order to unite the sums. Then we are going to involve two terms on the last line of \eqref{6.21} into the same sum. This needs some logical and arithmetic analysis.

In the first sum on the right-hand side of \eqref{6.21}, the summation over $k$ can be extended to $1\le k\le m+1$ since $\beta(m,m+1,p)=0$ by the convention \eqref{6.11}. Let us demonstrate that the summation over $p$ can be extended to $0\le p\le\min(k,m-k+1)$. Indeed,
$\min(k,m-k)=\min(k,m-k+1)$ if $k\le m-k$. If $k> m-k$, then there appears one extra term corresponding to $p=m-k+1$ in the first sum. But
$\beta(m,k,m-k+1)=0$ by the convention \eqref{6.11}. Thus, summation limits of the first sum can be replaced with
\begin{equation}
1\le k\le m+1,\quad 0\le p\le\min(k,m-k+1).
                           \label{6.22}
\end{equation}

In the second sum on the right-hand side of \eqref{6.21}, the summation over $k$ can be extended to $1\le k\le m+1$ since $\beta(m,0,p)=0$ by the convention \eqref{6.11}. Let us demonstrate that the summation over $p$ can be extended to $0\le p\le\min(k,m-k+1)$. Indeed,
$\min(k-1,m-k)=\min(k,m-k+1)$ if $k> m-k+1$. If $k\le m-k+1$, then there appears one extra term corresponding to $p=k$ in the second sum. But
$\beta(m,k-1,k)=0$ by the convention \eqref{6.11}. Thus, summation limits of the second sum can be replaced with \eqref{6.22}.

In the third sum on the right-hand side of \eqref{6.21}, the summation over $k$ can be extended to $1\le k\le m+1$ since $\beta(m,0,p-1)=0$ by the convention \eqref{6.11}. The lower summation limit over $p$ can be replaced with zero since $\beta(m,k-1,-1)=0$ by the convention \eqref{6.11}. Let us demonstrate that the upper summation limit over $p$ can be replaced with $\min(k,m-k+1)$. Indeed,
$\min(k-1,m-k+1)+1=\min(k,m-k+1)$ if either $2k<m+2$ or $2k>m+2$. The only critical case is $2k=m+2$ when $\min(k-1,m-k+1)+1=m-k+2$ and $\min(k,m-k+1)=m-k+1$. We are going to loose the term corresponding to $p=m-k+2$ after the replacement. But this term is equal to zero due to the presence of the factor $\frac{m\!-\!k\!-\!p\!+\!2}{k}$.

Thus, summation limits can be replaced with \eqref{6.22} in all sums on the right-hand side of \eqref{6.21}. After the replacement, in \eqref{6.21}, the middle three terms,
$$
\begin{aligned}
&\sum\limits_{k=1}^m\sum\limits_{p=0}^{\min(k,m-k)}\!\!
\frac{1}{2m\!-\!2k\!+\!1}\,\beta(m,k,p)\,i_y^{k-p}i^pj_y^p(A^{(m+1,k)}/g)\\
&-\sum\limits_{k=2}^{m+1}\sum\limits_{p=0}^{\min(k-1,m-k+1)}
\frac{k\!-\!p}{k}\,\beta(m,k-1,p)\,i_y^{k-p}i^pj_y^p(A^{(m+1,k)}/g)\\
&+\sum\limits_{k=2}^{m+1}\sum\limits_{p=1}^{\min(k-1,m-k+1)+1}
\frac{m\!-\!k\!-\!p\!+\!2}{k}\,\beta(m,k-1,p-1)\,i_y^{k-p}i^{p}j_y^{p}(A^{(m+1,k)}/g)
\end{aligned}
$$
can be combined into a single sum,
$$
\sum\limits_{k=1}^{m+1}\sum\limits_{p=0}^{\min(k,m-k+1)}
\tilde\beta(m+1,k,p)\,i_y^{k-p}i^{p}j_y^{p}(A^{(m+1,k)}/g).
$$
Thus, we write \eqref{6.21} in the form
\begin{equation}
\begin{aligned}
&A^{(m+1,0)}/g
=(m\!+\!1)!(2m\!+\!1)!!\,|y|^{-1}\,g\\
&+(2m\!+\!1)\sum\limits_{k=1}^{m+1}\sum\limits_{p=0}^{\min(k,m-k+1)}
\tilde\beta(m+1,k,p)\,i_y^{k-p}i^{p}j_y^{p}(A^{(m+1,k)}/g)\\
&+(2m\!+\!1)\,i_y\,(A^{(m+1,1)}/g)
-m(2m\!+\!1)\,ij_y\,(A^{(m+1,1)}/g),
\end{aligned}
                           \label{6.23}
\end{equation}
where $\tilde\beta(m+1,k,p)$ is defined by \eqref{6.9}.

Finally, we have to include two terms on the last line of \eqref{6.23} into the sum. The term $i_y\,(A^{(m+1,1)}/g)$ corresponds to $(k,p)=(1,0)$ and the term $ij_y\,(A^{(m+1,1)}/g)$ corresponds to $(k,p)=(1,1)$. Therefore we define $\beta(m+1,k,p)$ by \eqref{6.10}.
The formula \eqref{6.23} becomes now
$$
\begin{aligned}
A^{(m+1,0)}/g
&=(m\!+\!1)!(2m\!+\!1)!!\,|y|^{-1}\,g\\
&+\sum\limits_{k=1}^{m+1}\sum\limits_{p=0}^{\min(k,m-k+1)}
\beta(m+1,k,p)\,i_y^{k-p}i^{p}j_y^{p}(A^{(m+1,k)}/g).
\end{aligned}
$$
This coincides with \eqref{6.8} for $m:=m+1$.
\end{proof}

\begin{proof}[Proof of Proposition \ref{P6.1}]
Coefficients $\beta(m,k,p)$ are determined by recurrent formulas \eqref{6.9}--\eqref{6.11}. Nevertheless, the coefficients can be expressed by the explicit formula
\begin{equation}
\beta(m,k,p) = \begin{cases}
(-1)^{k+p+1} \frac{(2m-1)!!}{(2m-2k-1)!!}  2^{-p} \binom{m}{k} \binom{m-k}{p} & \text{for } k > 0, \\
0 & \text{for } k \le 0.
\end{cases}
\label{6.24}
\end{equation}
Indeed, being defined by \eqref{6.24}, $\beta(m,k,p)$ satisfy \eqref{6.11} with the convention \eqref{5.8a}. The formulas \eqref{6.9}--\eqref{6.10} can be equivalently written in the form
\begin{equation}
\beta(m+1,1,0)=(2m+1)\Big(\frac{1}{2m-1}\,\beta(m,1,0)+1\Big),
                           \label{6.25}
\end{equation}
\begin{equation}
\beta(m+1,1,1)=(2m+1)\Big(\frac{1}{2m-1}\,\beta(m,1,1)-m\Big),
                           \label{6.26}
\end{equation}
\begin{equation}
\begin{aligned}
&\beta(m+1,k,p)=(2m+1)\bigg[\frac{1}{2m-2k+1}\,\beta(m,k,p)
-\frac{k-p}{k}\,\beta(m,k-1,p)\\
&+\frac{m-k-p+2}{k}\,\beta(m,k-1,p-1)\bigg]\quad\big((k,p)\neq(1,0),(k,p)\neq(1,1)\big).
\end{aligned}
                           \label{6.27}
\end{equation}
Unlike \eqref{6.9}--\eqref{6.10}, formulas \eqref{6.25}--\eqref{6.27} do not involve $\tilde\beta(m+1,k,p)$.
One can easily prove that equations \eqref{6.25}--\eqref{6.27} are satisfied by values \eqref{6.24}.

We express $g$ from \eqref{6.8}. Substituting the value \eqref{6.24} of $\beta(m,k,p)$ into the expression, we arrive at the formula \eqref{6.0}. This completes the proof of Proposition \ref{P6.1}.
\end{proof}

\section{\texorpdfstring{Commutator formula for $d$ and $i_y$}{Commutator formula for d and i\_y}}

The operators $i_u$ of symmetric multiplication by a tensor $u$ and $j_u$ of contraction with $u$ were defined in Section 2.1. For arbitrary tensors $u$ and $v$,
$$
i_ui_v=i_vi_u=i_{uv}.
$$
These equalities hold in virtue of the fact: the symmetric product of tensors is associative and commutative. Passing to adjoints, we obtain the less obvious statement
$$
j_uj_v=j_vj_u=j_{uv}.
$$

Recall that $i=i_\delta, j=j_\delta$ where $\delta$ is the Kronecker tensor; $d$ is the inner derivative. The operators $d$ and $i$ commute, this fact is proved by a direct calculation in coordinates. But $d$ and $j$ do not commute.

The operator $i_y$ of symmetric multiplication by the vector $y\in{\R}^n$ plays an important role in our arguments.

\begin{proposition} \label{P7.1}
For integers $k\ge0$ and $l\ge0$,
\begin{equation}
d^ki_y^l=\sum\limits_{p=\max(0,k-l)}^k\binom{k}{p}\frac{l!}{(p-k+l)!}\,i_y^{p-k+l}i^{k-p}d^p.
                           \label{7.1}
\end{equation}
\end{proposition}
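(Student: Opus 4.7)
The plan is to reduce \eqref{7.1} to the elementary commutator $[d,i_y]=i$ and then induct on $k$; the real content lies in the base commutator, and the rest is combinatorial bookkeeping.

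First I would prove $d\,i_y = i_y\,d + i$ by a direct coordinate computation. For $f\in C^\infty(\R^n; S^m)$,
$$(d\,i_y f)_{i_1\dots i_{m+2}} = \sigma(i_1\dots i_{m+2})\,\partial_{i_{m+2}}\bigl(y_{i_1}\,f_{i_2\dots i_{m+1}}\bigr),$$
and Leibniz's rule splits this into two pieces. The piece containing $\delta_{i_1 i_{m+2}}\,f_{i_2\dots i_{m+1}}$, after full symmetrization, is exactly $(if)_{i_1\dots i_{m+2}}$ since $i=i_\delta$; the piece containing $y_{i_1}\,\partial_{i_{m+2}}f_{i_2\dots i_{m+1}}$, after full symmetrization, is $(i_y\,df)_{i_1\dots i_{m+2}}$. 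A short induction on $l$, using that $i$ and $i_y$ commute (both are symmetric multiplications), then upgrades this to the $k=1$ instance of \eqref{7.1}:
$$d\,i_y^l = i_y^l\,d + l\,i_y^{l-1}\,i, \qquad l\ge 0.$$

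Next I would induct on $k$. Adopting the convention $1/(-N)!=0$ for positive integers $N$ makes the lower bound $p\ge\max(0,k-l)$ automatic. Assuming \eqref{7.1} at level $k$, apply $d$ to each summand $\binom{k}{p}\frac{l!}{(p-k+l)!}\,i_y^{p-k+l}\,i^{k-p}\,d^p$. Because $d$ commutes with $i$ (as the proposition's preamble notes), pushing $d$ to the right only requires
$$d\,i_y^{p-k+l} = i_y^{p-k+l}\,d + (p-k+l)\,i_y^{p-k+l-1}\,i$$
from the previous paragraph. Each summand splits into two pieces. Re-indexing the ``$d$-moved'' piece by $q=p+1$ and keeping the ``$i$-produced'' piece at $q=p$, both contribute to the same monomial $i_y^{q-(k+1)+l}\,i^{(k+1)-q}\,d^q$, and — after the factor $(p-k+l)$ in the second piece cancels one unit of the factorial — the combined coefficient becomes
$$\Big[\binom{k}{q-1}+\binom{k}{q}\Big]\frac{l!}{(q-(k+1)+l)!} = \binom{k+1}{q}\frac{l!}{(q-(k+1)+l)!}$$
by Pascal's identity. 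This is precisely \eqref{7.1} at level $k+1$.

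The main obstacle is purely bookkeeping around summation bounds when $l<k$, to keep spurious negative-exponent terms ($i_y^{-1}$, $1/(-1)!$, and so on) from appearing. The factorials-in-the-denominator convention handles this cleanly: whenever $p-k+l<0$ the coefficient $1/(p-k+l)!$ is zero, and whenever the second piece of the split would require $i_y^{-1}$, the explicit factor $(p-k+l)=0$ kills it at the boundary $p=k-l$. Thus the induction proceeds without further delicate case analysis.
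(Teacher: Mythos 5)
Your proposal is correct and follows essentially the same route as the paper's proof: the base commutator $di_y=i_y d+i$ verified in coordinates, its upgrade to $di_y^l=i_y^l d+l\,i_y^{l-1}i$ by induction on $l$, and then induction on $k$ with a re-indexing and Pascal's identity, with the boundary terms killed exactly as you describe (the vanishing factor $p-k+l$ at $p=k-l$). No gaps.
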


\begin{proof}
The equality \eqref{7.1} trivially holds in the case of $k=0$ and in the case of $l=0$. In the case $k=l=1$, \eqref{7.1} looks as follows:
\begin{equation}
di_y=i+i_yd.
                           \label{7.2}
\end{equation}
This formula is easily proved by a direct calculation in coordinates. In the case $k=1$ and of arbitrary $l$, \eqref{7.1} looks as follows:
\begin{equation}
di_y^l=li_y^{l-1}i+i_y^ld.
                           \label{7.3}
\end{equation}
This formula is easily proved on the base of \eqref{7.2} by induction in $l$.

Now we prove \eqref{7.1} by induction in $k$. Assuming the validity of \eqref{7.1} for some $k$, we apply the operator $d$ to this equality
$$
d^{k+1}i_y^l=\sum\limits_{p=\max(0,k-l)}^k\binom{k}{p}\frac{l!}{(p-k+l)!}\,(di_y^{p-k+l})i^{k-p}d^p.
$$
By \eqref{7.3},
$di_y^{p-k+l}=(p-k+l)i_y^{p-k+l-1}i+i_y^{p-k+l}d$. Substituting this expression into the previous formula, we obtain
$$
\begin{aligned}
d^{k+1}i_y^l&=\sum\limits_{p=\max(0,k-l)}^k\binom{k}{p}\frac{l!(p-k+l)}{(p-k+l)!}\,i_y^{p-k+l-1}i^{k-p+1}d^p\\
&+\sum\limits_{p'=\max(0,k-l)}^k\binom{k}{p'}\frac{l!}{(l-k+p')!}\,i_y^{p'-k+l}i^{k-p'}d^{p'+1}.
\end{aligned}
$$
Changing the summation variable in the second sum as $p'=p-1$, we write this in the form
\begin{equation}
\begin{aligned}
d^{k+1}i_y^l&=\sum\limits_{p=\max(0,k-l)}^k\binom{k}{p}\frac{l!(p-k+l)}{(p-k+l)!}\,i_y^{p-k+l-1}i^{k-p+1}d^p\\
&+\sum\limits_{p=\max(0,k-l)+1}^{k+1}\binom{k}{p-1}\frac{l!}{(l-k+p-1)!}\,i_y^{p-k+l-1}i^{k-p+1}d^p.
\end{aligned}
                           \label{7.4}
\end{equation}

We assume the binomial coefficients to be defined for all integers with the convention \eqref{5.8a}. With the convention, the summation limits of the first sum on \eqref{7.4} can be extended to
$$
\max(0,k-l)\le p\le k+1.
$$
Let us demonstrate that the summation limits of the first sum on \eqref{7.4} can be changed to
$$
\max(0,k-l+1)\le p\le k+1.
$$
Indeed, $\max(0,k-l)=\max(0,k-l+1)$ if $k-l<0$. If $k-l\ge0$, then $\max(0,k-l)=k-l$ and $\max(0,k-l+1)=k-l+1$. In the latter case, the term corresponding to $p=k-l$ is equal to zero because of the presence of the factor $p-k+l$.

Similar arguments show that the summation limits of the second sum on \eqref{7.4} can be changed to
$\max(0,k-l+1)\le p\le k+1$. The formula \eqref{7.4} takes the form
$$
\begin{aligned}
d^{k+1}i_y^l&=\sum\limits_{p=\max(0,k-l+1)}^{k+1}\binom{k}{p}\frac{l!}{(p-k+l-1)!}\,i_y^{p-k+l-1}i^{k-p+1}d^p\\
&+\sum\limits_{p=\max(0,k-l+1)}^{k+1}\binom{k}{p-1}\frac{l!}{(l-k+p-1)!}\,i_y^{p-k+l-1}i^{k-p+1}d^p.
\end{aligned}
$$
We can now unite two sums
$$
d^{k+1}i_y^l=\sum\limits_{p=\max(0,k-l+1)}^{k+1}\bigg[\binom{k}{p}+\binom{k}{p-1}\bigg]\frac{l!}{(p-k+l-1)!}\,i_y^{p-k+l-1}i^{k-p+1}d^p.
$$
By the Pascal triangle equality, $\binom{k}{p}+\binom{k}{p-1}=\binom{k+1}{p}$ and our formula becomes
$$
d^{k+1}i_y^l=\sum\limits_{p=\max(0,k+1-l)}^{k+1}\binom{k+1}{p}\frac{l!}{(p-(k+1)+l)!}\,i_y^{p-(k+1)+l}i^{k+1-p}d^p.
$$
This finishes the induction step.
\end{proof}
Recall that $i_y^*=j_y$ and $d^*=-\mbox{div}$. Passing to adjoints in \eqref{7.1}, we obtain the commutator formula for $\mbox{div}$ and $j_y$
$$
j_y^l\,\mbox{div}^k=(-1)^k\sum\limits_{p=\max(0,k-l)}^k(-1)^p\binom{k}{p}\frac{l!}{(p-k+l)!}\,\mbox{div}^p\,j^{k-p}j_y^{p-k+l}.
$$
As we have mentioned in Section 2, the operators $j$ and $\mbox{div}$ commute. Therefore the last formula can be written in the form
\begin{equation}
j_y^l\,\mbox{div}^k=(-1)^k\sum\limits_{p=\max(0,k-l)}^k(-1)^p\binom{k}{p}\frac{l!}{(p-k+l)!}\,j^{k-p}\,\mbox{div}^p\,j_y^{p-k+l}.
                           \label{7.5}
\end{equation}

\section{Proof of Theorem \ref{Th3.1}}

The formula \eqref{6.0} expresses the solution to the system \eqref{3.8} with arbitrary right-hand sides $H^{(m,k)}$ (if the system is solvable). Let us now consider the case of $H^{(m,k)}$ defined by \eqref{3.9}. In this case, the formula \eqref{6.0} can be essentially simplified. Indeed, applying the operator $j_y^p$ to the formula \eqref{3.9}, we obtain
\begin{equation}
j_y^pH^{(m,k)}=\frac{(2m-2k-1)!!(n+2m-2k-3)!!}{(2m-1)!!(n+2m-3)!!}
\sum\limits_{q=0}^k(-1)^q\binom{k}{q}j_y^p\,\mbox{\rm div}^{k-q}\,F^{(m,q)}.
                           \label{8.1}
\end{equation}
We consider \eqref{8.1} for $0\le p\le\min(k,m-k)$ since such $p$ participate in \eqref{6.0}.

By \eqref{7.5},
\begin{align}
    \begin{split}
     &  j_y^p\,\mbox{\rm div}^{k-q}\,F^{(m,q)}\\&\qquad=(-1)^{k-q}\!\!\!\!\!\sum\limits_{r=\max(0,k-p-q)}^{k-q}\!\!\!\!\!
(-1)^r\binom{k-q}{r}\frac{p!}{(r\!-\!k\!+\!p\!+\!q)!}\,j^{k-q-r}\,\mbox{\rm div}^r\,j_y^{r-k+p+q}F^{(m,q)}.
                           \label{8.2}
    \end{split}
\end{align}
We recall that $j_yF^{(m,p)}=0$, see \eqref{4.8}. Therefore there is at most one non-zero summand corresponding to $r=k-p-q$ on the right-hand side of \eqref{8.2}. In this way \eqref{8.2} simplifies to the formula
\begin{equation}
j_y^p\,\mbox{\rm div}^{k-q}\,F^{(m,q)}=
\left\{\begin{array}{ll}
(-1)^p\binom{k-q}{p}p!\,j^p\,\mbox{\rm div}^{k-p-q}\,F^{(m,q)}&\mbox{if}\ p+q\le k,\\
0&\mbox{if}\ p+q>k.\end{array}\right.
                           \label{8.3}
\end{equation}
Substituting the expression \eqref{8.3} into \eqref{8.1}, we obtain
\begin{equation}
\begin{aligned}
j_y^pH^{(m,k)}=\frac{(-1)^p(2m\!-\!2k\!-\!1)!!(n\!+\!2m\!-\!2k\!-\!3)!!}{(2m\!-\!1)!!(n\!+\!2m\!-\!3)!!}\,\frac{k!}{(k\!-\!p)!}
&\,j^p\!\!\!\sum\limits_{q=0}^{\min(k,k-p)}\!\!(-1)^q\binom{k-p}{q}\times\\
&\times \mbox{\rm div}^{k-p-q}\,F^{(m,q)}.
\end{aligned}
                           \label{8.4}
\end{equation}

Next, we replace the last factor $j_y^pH^{(m,k)}$ on the right-hand side of \eqref{6.0} with its value \eqref{8.4}
$$
\begin{aligned}
g(y)=\frac{|y|}{m!(2m\!-\!1)!!(n\!+\!2m\!-\!3)!!}&\sum\limits_{k=0}^m(-1)^k \binom{m}{k}(n\!+\!2m\!-\!2k\!-\!3)!!
\sum\limits_{p=0}^{\min(k,m-k)}\!\!2^{-p}\binom{m - k}{p}\times\\
&\times \frac{k!}{(k\!-\!p)!}\sum\limits_{q=0}^{\min(k,k-p)}\!\!(-1)^q\binom{k-p}{q}
i^pi_y^{k-p}j^p\,\mbox{\rm div}^{k-p-q}\,F^{(m,q)}.
\end{aligned}
$$
Since the operators $i$ and $i_y$ commute, this can be written in the form
\begin{align}\label{eq:p:100}
    \begin{split}
        (2m\!-\!1)!!(n\!+\!2m\!-\!3)!!\,g(y)&=|y|\,\sum\limits_{k=0}^m\sum\limits_{p=0}^{\min(k,m-k)}\sum\limits_{q=0}^{\min(k,k-p)}(-1)^{k+q}\times\\
&\times \frac{(n\!+\!2m\!-\!2k\!-\!3)!!}{2^p\,p!\,q!(m\!-\!k\!-\!p)!(k\!-\!p\!-\!q)!}\,
i_y^{k-p}i^pj^p\,\mbox{\rm div}^{k-p-q}\,F^{(m,q)}.
    \end{split}
\end{align}
Changing the order of summations, we write formula \eqref{eq:p:100} as follows:
$$
\begin{aligned}
&(2m\!-\!1)!!(n\!+\!2m\!-\!3)!!\,g(y)=|y|\,\sum\limits_{q=0}^m\frac{(-1)^q}{q!}\times\\
&\times\!\bigg(\sum\limits_{k=q}^m(-1)^k(n\!+\!2m\!-\!2k\!-\!3)!! \!\!\!\!\!\!\sum\limits_{p=0}^{\min(k,m-k,k-q)}\!\!\!\!\!\!\!\!\!
\frac{1}{2^p\,p!(m\!-\!k\!-\!p)!(k\!-\!p\!-\!q)!}\,
i_y^{k-p}i^pj^p\,\mbox{\rm div}^{k-p-q}\bigg)F^{(m,q)}.
\end{aligned}
$$
Finally, changing notation of summation variables, we write the formula in the final form
\begin{equation}
\begin{aligned}
&(2m\!-\!1)!!(n\!+\!2m\!-\!3)!!\,g(y)=|y|\,\sum\limits_{k=0}^m\frac{(-1)^k}{k!}\times\\
&\times\!\bigg(\sum\limits_{p=k}^m(-1)^p(n\!+\!2m\!-\!2p\!-\!3)!! \!\!\!\!\!\!\sum\limits_{q=0}^{\min(p,m-p,p-k)}\!\!\!\!\!\!\!\!\!
\frac{1}{2^q\,q!(m\!-\!p\!-\!q)!(p\!-\!k\!-\!q)!}\,
i_y^{p-q}i^qj^q\,\mbox{\rm div}^{p-k-q}\bigg)F^{(m,k)}.
\end{aligned}
                           \label{8.5}
\end{equation}

Recall that we denoted $\widehat f$ by $g$ and $F^{(m,k)}$ was defined by \eqref{3.6} in Section 3. Replacing $g$ with $\widehat f$ and replacing $F^{(m,k)}$ with its value \eqref{3.6}, we write the formula \eqref{8.5} as follows:
\begin{equation}
\begin{aligned}
\widehat f(y)&=\frac{2^{m-2}\Gamma\big(\frac{2m+n-1}{2}\big)}{\pi^{(n+1)/2}(n+2m-3)!!}\,|y|\,\sum\limits_{k=0}^m\frac{(-1)^k}{(k!)^2}
\bigg(\sum\limits_{p=k}^m(-1)^p(n\!+\!2m\!-\!2p\!-\!3)!! \\
&\times\sum\limits_{q=0}^{\min(p,m-p,p-k)}
\frac{1}{2^q\,q!(m\!-\!p\!-\!q)!(p\!-\!k\!-\!q)!}\,
i_y^{p-q}i^qj^q\,\mbox{\rm div}^{p-k-q}\,j_y^k\bigg)\widehat{N_m^kf}.
\end{aligned}
                           \label{8.6}
\end{equation}
Introducing the differential operator
\begin{align}
    \begin{split}
        \widehat D^k_{m,n}&=c^k_{m,n}\sum\limits_{p=k}^m(-1)^p(n\!+\!2m\!-\!2p\!-\!3)!! \!\!\!\!\\&\qquad\times\sum\limits_{q=0}^{\min(p,m-p,p-k)}\!\!\!\!\!\!\!\!\!
\frac{1}{2^q\,q!(m\!-\!p\!-\!q)!(p\!-\!k\!-\!q)!}\,
i_y^{p-q}i^qj^q\,\mbox{\rm div}^{p-k-q}\,j_y^k
                           \label{8.7}
    \end{split}
\end{align}
with the coefficient $c^k_{m,n}$ defined by \eqref{3.3}, we write \eqref{8.6} in the form
\begin{equation}
\widehat f(y)=|y|\,\sum\limits_{k=0}^m\widehat D^k_{m,n}\widehat{N_m^kf}(y).
                           \label{8.8}
\end{equation}

It remains to apply the inverse Fourier transform ${\mathcal F}^{-1}$ to the formula \eqref{8.8}. Since
${\mathcal F}^{-1}|y|=(-\Delta)^{1/2}{\mathcal F}^{-1}$, we obtain
\begin{equation}
f=(-\Delta)^{1/2}\sum\limits_{k=0}^m D^k_{m,n}(N_m^kf),
                           \label{8.9}
\end{equation}
where $D^k_{m,n}={\mathcal F}^{-1}\,\widehat D^k_{m,n}\,{\mathcal F}$. Together with \eqref{8.7}, the latter equality gives
\begin{equation}
\begin{aligned}
D^k_{m,n}=c^k_{m,n}\sum\limits_{p=k}^m(-1)^p(n\!+\!2m\!-\!2p\!-\!3)!! &\sum\limits_{q=0}^{\min(p,m-p,p-k)}
\frac{1}{2^q\,q!(m\!-\!p\!-\!q)!(p\!-\!k\!-\!q)!}\times\\
&\times {\mathcal F}^{-1}\,i_y^{p-q}i^qj^q\,\mbox{\rm div}^{p-k-q}\,j_y^k\,{\mathcal F}.
\end{aligned}
                           \label{8.10}
\end{equation}

Using the commutator formulas ($\textsl{i}$ is the imaginary unit)
$$
\begin{aligned}
&{\mathcal F}^{-1}\,i_y=-\textsl{i}\,d\,{\mathcal F}^{-1},\quad
{\mathcal F}^{-1}\,j_y=-\textsl{i}\,\mbox{div}\,{\mathcal F}^{-1},\quad
{\mathcal F}^{-1}\,\mbox{div}=-\textsl{i}\,j_x\,{\mathcal F}^{-1},\\
&{\mathcal F}^{-1}i=i{\mathcal F}^{-1},\quad {\mathcal F}^{-1}j=j{\mathcal F}^{-1},
\end{aligned}
$$
we obtain
$$
{\mathcal F}^{-1}\,i_y^{p-q}i^qj^q\,\mbox{\rm div}^{p-k-q}\,j_y^k\,{\mathcal F}
=(-1)^{p+q}\,d^{p-q}\,i^q\,j^q\,j_x^{p-k-q}\,\mbox{\rm div}^k.
$$
Inserting this expression into \eqref{8.10}, we get
\begin{align}
    \begin{split}
    &D^k_{m,n}=c^k_{m,n}\sum\limits_{p=k}^m(n\!+\!2m\!-\!2p\!-\!3)!! \!\!\!\!\\&\qquad\qquad\qquad\times\sum\limits_{q=0}^{\min(p,m-p,p-k)}\!\!\!\!
\frac{(-1)^q}{2^q\,q!(m\!-\!p\!-\!q)!(p\!-\!k\!-\!q)!}
\,d^{p-q}\,i^q\,j^q\,j_x^{p-k-q}\,\mbox{\rm div}^k.
                           \label{8.12}
    \end{split}
\end{align}
The formulas \eqref{8.9} and \eqref{8.12} coincide with \eqref{3.1} and \eqref{3.2} respectively. This completes the proof of Theorem \ref{Th3.1}.

\section*{Acknowledgements}
SRJ and VPK would like to thank the Isaac Newton Institute for Mathematical Sciences, Cambridge, UK, for support and hospitality during \emph{Rich and Nonlinear Tomography - a multidisciplinary approach} in 2023 where part of this work was done (supported by EPSRC Grant Number EP/R014604/1). Additionally, VPK acknowledges the support of the Department of Atomic Energy,  Government of India, under
Project no.  12-R\&D-TFR-5.01-0520. SRJ acknowledges the Prime Minister's Research Fellowship (PMRF) from the Government of India for his PhD work. MK was supported by MATRICS grant (MTR/2019/001349) of SERB.
The work of VAS was performed according to the Russian Government research assignment for IM~SB~RAS, project FWNF-2022-0006. We thank the anonymous referees for their many helpful comments that improved the final manuscript.

\bibliography{math}
\bibliographystyle{alpha}


\end{document}